\theoremstyle{plain}
\newtheorem{theorem}{Theorem}[section]
\newtheorem*{theorem*}{Theorem}
\newtheorem{lemma}[theorem]{Lemma}
\newtheorem*{lemma*}{Lemma}
\newtheorem*{korollar*}{Korollar}
\newtheorem{proposition}[theorem]{Proposition}
\newtheorem*{proposition*}{Proposition}
\newtheorem*{satz*}{Satz}
\newtheorem{corollary}[theorem]{Corollary}
\newtheorem*{corolarry*}{Corollary}
\newtheoremstyle{named}{\topsep}{\topsep}{\itshape}{0pt}{\bfseries}{}{5pt plus 1pt minus 1pt}{\thmname{#1}\thmnumber{ #2.}\thmnote{\;(#3)}}
\theoremstyle{named}
\newtheorem*{namedtheorem*}{Theorem}
\newtheorem{namedtheorem}[theorem]{Theorem}
\newtheorem{namedlemma}[theorem]{Lemma}
\newtheorem*{namedlemma*}{Lemma}
\newtheorem*{namedkorollar*}{Korollar}
\newtheorem{namedproposition}[theorem]{Proposition}
\newtheorem*{namedproposition*}{Proposition}
\newtheorem{namedcorollary}[theorem]{Corollary}
\newtheorem*{namedcorolarry*}{Corollary}
\newtheorem{theoremA}{Theorem}
\theoremstyle{definition}
\newtheorem*{definition*}{Definition}
\newtheorem*{beispiel*}{Beispiel}
\newtheorem*{bemerkung*}{Bemerkung}
\newtheorem*{erinnerung*}{Erinnerung}
\newtheorem*{remark*}{Remark}
\newtheorem*{assumption*}{Assumption}
\newtheorem*{notation*}{Notation}
\newcommand{\laplace}{\Delta}
\renewenvironment{proof}[1][]{{\noindent\textbf{\proofname{#1}}}}{\qed}
\newcommand{\I}{\mathcal{I}}
\renewcommand{\L}{\mathcal{L}}
\newcommand{\R}{\mathbb{R}} 	
\newcommand{\N}{\mathbb{N}} 	
\newcommand{\grad}{\nabla}
\renewcommand{\div}{\grad\cdot}
\newcommand{\mel}{\MoveEqLeft}
\renewcommand{\la}{\langle}
\renewcommand{\ra}{\rangle}
\newcommand{\ltwomunorm}[1]{\left\| #1 \right\|_{L^2\left(\mu\right)}}
\newcommand{\ltwoscalarproduct}[2]{\langle #1, #2\rangle_\mu}
\newcommand{\absolutevalue}[1]{\left|#1\right|}
\newcommand{\myvector}[1]{\ensuremath{\begin{pmatrix}#1\end{pmatrix}}}
\renewcommand{\H}{\mathbb{H}}
\begin{document}

\title{Fine large-time asymptotics for the axisymmetric Navier--Stokes equations}

\author{Christian Seis\footnote{Institut f\"ur Analysis und Numerik,  Universit\"at M\"unster. Email: seis@uni-muenster.de} }
			\author{Dominik Winkler\footnote{Institut f\"ur Analysis und Numerik,  Universit\"at M\"unster. Email: dominik.winkler@uni-muenster.de}}
			
	\affil{}
	\date{\today}

	\maketitle
     
     \begin{abstract}
          We examine the large-time behavior of axisymmetric solutions without swirl of the Navier--Stokes equation in $\R^3$. We construct higher-order asymptotic expansions for the corresponding vorticity.
          The appeal of this work lies in the simplicity of the applied techniques: Our approach is completely based on standard $L^2$-based entropy methods.
     \end{abstract}

\section{Introduction and results}

In the present work, the large-time behavior of axisymmetric solutions to the three-dimensional Navier--Stokes equations is investigated. Our aim is to describe intermediate asymptotics of any solution near equilibrium. 
Even though the assumed axisymmetry reduces the dimension  of the problem and thus the level of its difficulty, some rewarding examples, e.g.\ vortex rings, are contained in this class of incompressible flows. Moreover, opposed to solutions in the planar two-dimensional setting, axisymmetric solutions are exposed to vortex stretching and carry thus certain genuinely three-dimensional features.

We start by recalling that the full three-dimensional  Navier--Stokes equations are the following system of partial differential equations
     \begin{equation}\label{navierstokesequations}
     \begin{aligned}
          \partial_t u -\Delta_x u + u\cdot \nabla_x  u +\nabla_x p &=0 \quad \text{ in } \R^3\times \left[0,\infty\right),\\
          \nabla_x\cdot  u &= 0\quad \text{ in } \R^3\times  \left[0,\infty\right),
     \end{aligned}
     \end{equation}
where $u=u\left(x,t\right) \in \R^3$ denotes the velocity  of the fluid and $p=p\left(x,t\right) \in \R$ is the pressure. For notational simplicity, we have rescaled the equation in such a way  that the kinetic viscosity and the density of the fluid both are normalized to unity.

To describe axisymmetric solutions, we introduce the   usual cylindrical coordinates $\left(r,z,\theta\right)$  in $\R^3$ defined via $x = \left(r\cos(\theta),r\sin(\theta),z\right)$ with unit basis vectors given by  
     \begin{align}
          e_r = \myvector{\cos(\theta)\\\sin(\theta)\\0}, \quad e_\theta = \myvector{-\sin(\theta)\\\cos(\theta)\\0},\quad e_z=\myvector{0\\0\\z}.
     \end{align}
Fluid configurations are called \emph{axisymmetric} if they     are   invariant under rotations about the vertical axis, that is, the axisymmetric fluid velocity
is independent of the angle $\theta$ so that $u = u(r,z,t)$. We furthermore assume that the \emph{swirl} component $u\cdot e_\theta$ of the velocity field vanishes. This second assumption has the effect, that the velocity field is also invariant under reflections by any plane containing the $z$-axis. The velocity field now reads as
\begin{align}\label{axisymmetricvelocityfield}
     u(x,t) = u_r\left(r,z,t\right)e_r + u_z\left(r,z,t\right)e_z.
\end{align}
To simplify the language use in the following, we will simply speak of the \emph{axisymmetric} setting when referring to \eqref{axisymmetricvelocityfield}.

Writing the vorticity in cylindrical variables, that is, $\nabla_x \times u = \left(r^{-1}\partial_\theta u_z -\partial_zu_\theta\right) e_r +\left(\partial_zu_r-\partial_ru_z\right)e_\theta + r^{-1}\left(\partial_r\left(r u_\theta\right) - \partial_\theta u_r\right) e_z$, we see that the hypothesis in  \eqref{axisymmetricvelocityfield} implies that the vorticity is determined by a scalar quantity,
     \begin{align}
          \nabla_x\times u = \omega \,  e_\theta, \quad \text{where } \omega  = \partial_zu_r - \partial_ru_z
     \end{align}
--- very similarly to the    planar two-dimensional case.
The velocity field in turn can be recovered from the scalar vorticity by solving the linear elliptic system
     \begin{equation}\label{relationvoritcityvelocity}
                \partial_z u_r -\partial_ru_z=\omega,\quad 
               \partial_r u_r +\frac{1}{r}u_r +\partial_z u_z =0
     \end{equation}
in the meridional half space $\mathbb{H} = \left\{(r,z) : r\in (0,\infty), z\in \R\right\}$, equipped with the free-slip boundary conditions $u_r = \partial_zu_r =0$ at $\left\{r=0\right\}$. Notice that the second identity in \eqref{relationvoritcityvelocity} is nothing but the incompressibility condition restated in cylindrical coordinates.

The time evolution of $\omega$ is described by the following scalar equation
	\begin{align}\label{vorticityequation}
          \partial_t \omega + u \cdot \nabla \omega - \frac{u_r}{r}\omega = \Delta\omega +\frac{1}{r}\partial_r\omega- \frac{\omega}{r^2} \quad \text{ in } \mathbb{H}\times \left[0,\infty\right) 
     \end{align}
with the homogeneous Dirichlet boundary condition $\omega =0$ at $\left\{r=0\right\}$. The second term on the left-hand side describes the advection by the flow, while the third term on the left-hand side accounts for possible vortex stretching. The occurrence of this term is different from the planar setting, where vortex stretching effects are absent. The derivative terms on   the right-hand side of \eqref{vorticityequation} come simply from the Laplacian in $\R^3$ written in cylindrical variables and the remaining expression results from differentiating the azimuthal basis vector $e_{\theta}$. Here  and in the following, we interpret the standard non-indexed differentiation symbols with respect to cylindrical coordinates, i.e., $\grad  = (\partial_r,\partial_z)^T$ and accordingly $\laplace  = \div \grad =\partial_r^2+\partial_z^2$.

The theory for the vorticity stream formulation \eqref{relationvoritcityvelocity}, \eqref{vorticityequation} of the axisymmetric Navier--Stokes equations parallels in many aspects the one in the planar two-dimensional setting, see, for instance, \cite{UkhovskiiYudovich1968,Ladyzenskaya1968,BenArtzi1994,Abidi2008,FengSverak15,GallaySverak2015}  regarding regularity and well-posedness, \cite{UkhovskiiYudovich1968,JiuXin04,HmidiZerguine09,Wu10,
JiuWuYang15,Gallay2019,Abe20,NobiliSeis22,Butta2022} for studies of the inviscid limit and \cite{GallaySverak2015,Vila2022,Liu2023} for questions regarding the  large-time behavior. For us most notable are some of the results by 
Gallay and \v{S}ver\'ak and their improvements by Vila, which we recall in the following.

\begin{theoremA}[\cite{GallaySverak2015,Vila2022}]
\label{TA}
The large-time behavior of any solution $\omega\in C^0( (0,\infty ),L^1(\mathbb{H})\cap L^\infty(\mathbb{H}))$ with $r^2 w\in L^{\infty} ( (0,\infty ),L^1(\mathbb{H} ))$ is given by
     \begin{align}\label{convergencevorticity}
          t^2\omega\left(\hat r\sqrt{t},\hat z\sqrt{t},t\right) \rightarrow \frac{\mathcal{I}(\omega_0)}{16\sqrt{\pi}} \hat re^{-\frac{\hat r^2+\hat z^2}{4}} \quad \text{ as } t \to \infty,
     \end{align}
     where $\mathcal{I}(\omega_0)$ is the fluid impulse,
          \begin{align}
          \mathcal{I}(\omega_0) \coloneqq \int r^2\omega_0(r,z)drdz < \infty,
     \end{align}
     and  the convergence holds in $L^p\left(\mathbb{H}\right)$ for all $p\in \left[1,\infty\right]$.
\end{theoremA}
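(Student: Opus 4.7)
The plan is to pass to the reduced variable $\eta := \omega/r$, recast the equation as a linear drift-diffusion equation on $\mathbb{R}^5$, and then apply $L^2$-based entropy / spectral-gap estimates to the resulting Fokker--Planck problem in self-similar variables, treating the advection by $u$ as a perturbation. A direct computation starting from \eqref{vorticityequation} yields
\[
\partial_t \eta + u\cdot\nabla \eta = \Delta \eta + \frac{3}{r}\partial_r \eta,
\]
whose right-hand side is precisely the Laplacian on $\mathbb{R}^5$ acting on functions of $(r,z)$ with $r$ reinterpreted as the radial variable of $\mathbb{R}^4$. Thus, after lifting to $\mathbb{R}^5$, $\eta$ obeys a linear drift-diffusion equation whose total mass against $d\xi = 2\pi^2 r^3\,dr\,dz$ equals $2\pi^2\mathcal{I}(\omega_0)$; conservation of this mass follows from an integration by parts using \eqref{relationvoritcityvelocity} and the Dirichlet condition at $\{r=0\}$.

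Next, introduce the self-similar variables $\hat r = r/\sqrt{t}$, $\hat z = z/\sqrt{t}$, $\tau = \log t$, together with $\tilde\eta(\hat r,\hat z,\tau) := t^{5/2}\eta(r,z,t)$ and $\tilde u := \sqrt{t}\,u$. The rescaled equation reads
\[
\partial_\tau \tilde\eta + \tilde u\cdot\hat\nabla \tilde\eta = \mathcal{L}_5 \tilde\eta, \qquad \mathcal{L}_5 f := \hat\Delta f + \frac{3}{\hat r}\partial_{\hat r} f + \tfrac12\, \xi\cdot\hat\nabla f + \tfrac52 f,
\]
where $\mathcal{L}_5$ is the Ornstein--Uhlenbeck operator on $\mathbb{R}^5$, with kernel spanned by $G(\xi) = (4\pi)^{-5/2} e^{-|\xi|^2/4}$. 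The identity $\hat r \cdot 2\pi^2\mathcal{I}(\omega_0)\,G(\hat r,\hat z) = \frac{\mathcal{I}(\omega_0)}{16\sqrt{\pi}}\,\hat r\, e^{-(\hat r^2+\hat z^2)/4}$ identifies the target profile of \eqref{convergencevorticity} as $\hat r$ times the projection of $\tilde\eta$ onto $\ker \mathcal{L}_5$.

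It then suffices to prove that $f := \tilde\eta - 2\pi^2\mathcal{I}(\omega_0)\,G$ decays in $\tau$. On the weighted Hilbert space $L^2(\mathbb{R}^5;G^{-1}d\xi)$, the operator $\mathcal{L}_5$ is self-adjoint with discrete spectrum $\{-k/2 : k\ge 0\}$; axisymmetry in $\mathbb{R}^4$ eliminates the four translational modes at level $-\tfrac12$, leaving only $\hat z\,G$, which can be removed by centering the frame along the $z$-axis. This yields a spectral gap $\alpha > 0$ for $\mathcal{L}_5$ on the orthogonal complement of $\ker\mathcal{L}_5$, and testing the equation for $f$ against $fG^{-1}$ gives
\[
\tfrac12 \tfrac{d}{d\tau}\|f\|_{L^2(G^{-1})}^2 + \alpha\,\|f\|_{L^2(G^{-1})}^2 \;\le\; \bigl| \langle \tilde u\cdot\hat\nabla \tilde\eta,\, f\,G^{-1}\rangle_{L^2(\mathbb{R}^5)} \bigr|.
\]

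The main obstacle is controlling the nonlinear right-hand side. Here I would derive a rescaled Biot--Savart estimate from the elliptic system \eqref{relationvoritcityvelocity}, bounding $\tilde u$ in terms of $\tilde\omega = \hat r\,\tilde\eta$, and then combine it with a priori $L^1\cap L^\infty$ bounds on $\tilde\omega$ inherited from the hypotheses on $\omega$ in order to absorb the nonlinearity into a fraction of the dissipation, at least for $\tau$ sufficiently large. Entering $L^2(G^{-1})$ in the first place also requires a preliminary bootstrap that produces Gaussian-type pointwise decay of $\tilde\eta$, again exploiting the $L^1\cap L^\infty$ assumptions together with standard heat smoothing. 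Once both ingredients are in place, Gronwall's inequality yields exponential decay of $\|f\|_{L^2(G^{-1})}$ in $\tau$, i.e.\ polynomial decay in $t$, from which the $L^p$ convergence in \eqref{convergencevorticity} for every $p\in[1,\infty]$ follows by interpolating between the weighted $L^2$ bound on $f$ and the uniform $L^1\cap L^\infty$ bounds on $\tilde\omega$.
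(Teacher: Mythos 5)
Your sketch does not match how the paper handles this statement, and more importantly it has a genuine gap. First, as a matter of attribution: Theorem~\ref{TA} is a \emph{cited} result, not proved in this paper. Gallay and \v{S}ver\'ak establish it (for non-negative vorticity) via semi-group estimates in weighted $L^1$/$L^\infty$ spaces together with a compactness-and-uniqueness argument, and Vila removes the sign assumption by showing that $\|r^2\omega\|_{L^1(\H)}$ stays uniformly bounded. Neither work uses an $L^2$-based entropy argument in a Gaussian-weighted space to prove \eqref{convergencevorticity}.

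The substantive problem with your plan is the passage to $L^2(\R^5,G^{-1}\,d\xi)$, equivalently $L^2(\mu)$. You correctly observe that this is the space in which $\mathcal{L}_5$ has a discrete spectrum and a spectral gap, but membership of $\tilde\eta(\tau)$ in this space is a \emph{Gaussian concentration} condition on the vorticity at the self-similar scale, not a regularity condition. The hypotheses of Theorem~\ref{TA} --- $\omega\in L^1(\H)\cap L^\infty(\H)$ and $r^2\omega\in L^1(\H)$ --- are compatible with data whose tails in $(r,z)$ decay only polynomially. Since the self-similar rescaling is exactly the diffusive scaling $\sqrt t$, polynomial tails persist in the rescaled variable $\tilde\eta$ for \emph{all} times, so $\tilde\eta(\tau)\notin L^2(G^{-1})$ for any $\tau$. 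Your ``preliminary bootstrap'' that would produce Gaussian-type pointwise decay from heat smoothing does not exist under these assumptions: the free heat flow from an $L^1\cap L^\infty$ datum with heavy tails does not gain Gaussian decay at scale $\sqrt t$, and adding the Biot--Savart advection only makes matters harder. This is precisely why the present paper introduces the extra concentration hypothesis \eqref{116} (equivalently, $f_0\in L^2(\mu)$) for its own results and leans on Theorem~\ref{TA} as an external input rather than reproving it. Under \eqref{116}, the kind of weighted-$L^2$ argument you describe is essentially what the paper carries out (in the measure $\mu$ on the half-plane, which is the 5D Gaussian measure restricted to axisymmetric functions after the $\omega=r\eta$ reduction), but it proves a finer statement than \eqref{convergencevorticity} and only for this restricted class of data. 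A secondary issue: you propose removing the $\hat z\,G$ mode ``by centering the frame along the $z$-axis,'' but the vertical first moment $\int zr^2\omega\,drdz$ is not conserved by the nonlinear flow, so a one-time centering does not annihilate this mode for all $\tau$; the paper treats it as a limiting coefficient $a_{1,1}$ rather than an initial shift.
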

We remark that we will distinguish between the Lebesgue spaces $L^p(\H)$ and $L^p(\R^3)$. Here, the first ones are defined with respect to the two-dimensional measure $drdz$ on the meridional half-plane $\mathbb{H}$ while the second one is equipped with     the full three-dimensional measure $dx$. Notice that when acting on axisymmetric functions or sets, it holds that $dx = 2\pi rdrdz$.

As expected, the fluid impulse is preserved during the evolution,
\begin{equation}
\label{111}
\mathcal{I}(\omega(t)) = \mathcal{I}(\omega_0)\,\quad \mbox{for any }t,
\end{equation}
and we will occasionally write  in the following simply  $\mathcal{I}$ instead of $\mathcal{I}(\omega_0)$.
In fact, Gallay and \v{S}ver\'ak show  well-posedness for the Navier--Stokes equation \eqref{vorticityequation} with integrable  initial data, and they establish the large-time behavior \eqref{convergencevorticity} for non-negative vorticities.  By enhancing the semi-group arguments of \cite{GallaySverak2015},
Vila then notes that the sign condition is not necessary for establishing these asymptotics. The main observation in \cite{Vila2022} is that  the quantity $\|r^2\omega\|_{L^1\left(\mathbb{H}\right)}$ remains uniformly bounded for all times, which replaces the conservation of the fluid impulse in the Gallay--\v{S}ver\'ak paper.

It is interesting to notice that the profile which prescribes the large-time behavior of the solution $\omega$ is the self-similar solution $\zeta_*$ of the \emph{linearized} equation 
     \begin{align}\label{linearizedvorticityequation}
          \partial_t\zeta =\Delta \zeta + \frac{1}{r}\partial_r\zeta -\frac{1}{r^2}\zeta, 
     \end{align}
     that is,
     \[
     \zeta_*(t,r,z) = \frac1{t^2} \rho_*\left(\frac{r}{\sqrt{t}},\frac{z}{\sqrt{t}}\right),\quad \rho_*(\hat r, \hat z) = \frac1{16\sqrt{\pi}} \hat r e^{-\frac{\hat r^2+\hat z^2}4},
     \]
     where the prefactor is chosen in such a way that it normalizes the impulse, $\I(\zeta_*) = \I(\rho_*)=1$. 
    
In a certain sense, this feature is  caused by the Dirichlet boundary conditions valid for $\omega$ at the boundary of the half-plane $\H$. Indeed, these boundary conditions enforce the decay of the (scale invariant) $L^1$ norm, and more generally, 
     \begin{equation}
     \label{1}
\left\|\omega(t)\right\|_{L^p\left(\mathbb{H}\right)} \sim t^{-2+1/p } \I(\omega_0)\quad \mbox{as }t\to \infty,
     \end{equation}  
which is a direct consequence of the asymptotics \eqref{convergencevorticity}.
 For large times, the quadratic terms in the Navier--Stokes equations \eqref{vorticityequation} are thus negligible and  the dynamics of the vorticity are then governed by the linear  equation \eqref{linearizedvorticityequation}.
 
We want to emphasize that this behavior differs significantly from the planar two-dimensional setting, in which the $L^1$ norm is non-increasing, but in general not decaying, for instance, for vorticity distributions of definite sign. Here, the large-time behavior is described by the   Oseen vortices, which are self-similar solutions to the nonlinear planar vorticity equation.

In the general three-dimensional setting, the decay in time of (specified) solutions to the Navier-Stokes equations is known for almost forty years, see for instance \cite{GallagherIftimiePlanchon2003,KajikiyaMiyakawa1986,Kato1984,Schonbeck1985,Wiegner1987}.
In \cite{MiyakawaSchonbeck2001}, Miyakawa and Schonbeck provide optimal decay rates for solutions of Navier--Stokes equations in arbitrary space dimensions. They also contribute a characterization of solutions that satisfy the given optimal decay rate. In \cite{GallayWayne2002}, Gallay and Wayne consider the three-dimensional case and prove the same characterization via invariant manifold techniques
and also detect resonances between the different decay rates.

The purpose of the present paper is to study the large-time asymptotic behavior to \emph{higher order}. More specifically, we
demonstrate how higher-order asymptotics and sharp convergence rates can be obtained, as soon as the convergence to the self-similar profile, or more precisely the decay behavior $\left\|\omega(t)\right\|_{L^p\left(\mathbb{H}\right)} = \mathcal{O}\left(t^{-2+1/p}\right)$, is known. Certainly, our results could also be derived from the deeper and more abstract findings of \cite{GallayWayne2002}. However, what is charming in our new contribution is that
  our proofs are based on fairly simple $L^2$-based entropy methods and do not require any sophisticated or elaborate concepts. 

 Our result applies to situations, in which the initial vorticity is sufficiently concentrated,
 \begin{equation}
 \label{116}
 \int \omega_0(r,z)^2 r e^{\frac{r^2 +z^2}4} d(r,z) <\infty,
 \end{equation}
 which is trivially satisfied for compactly supported configurations, which are the ones that are physically relevant. We will see later on, that this condition is beneficial in the study of the large time behavior.  Notice also that by Jensen's inequality, the bound \eqref{116} entails that $r^2\omega_0\in L^1(\H)$.

In the remaining part of this section, we will give our results on higher-order asymptotic expansions of the vorticity $\omega$. We state these as corollaries because they will be deduced from a rather general result for the relative quantity, Theorem \ref{theoremhoehereOrdnung}. Remember, that we will permanently assume $\omega_0 \in L^1\left(\mathbb{H}\right)$ and $\left\|r^2\omega_0\right\|_{L^1\left(\mathbb{H}\right)}<\infty$.

     \begin{corollary}\label{secondorderasymptoticsforomega}
          Let $\omega$ be a solution of the axisymmetric vorticity equation \eqref{vorticityequation} with initial datum $\omega_0$ such that \eqref{116} holds.  Then there exists a constant $\alpha\in \R$ such that
          \[
          \|\omega(t) - \left(\mathcal{I}(\omega_0) + \frac{\alpha z}{t+1}\right) \zeta_*(t)\|_{L^2(\R^3)} \lesssim \log (t+1)(t+1)^{-9/4}.
          \]
     \end{corollary}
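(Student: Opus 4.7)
The plan is to transform to self-similar variables and apply the general relative-entropy result of Theorem \ref{theoremhoehereOrdnung} on top of a projection onto the second eigenspace of the linearized operator. Introduce
\[
\tau = \log(t+1), \quad \xi = \frac{r}{\sqrt{t+1}}, \quad \eta = \frac{z}{\sqrt{t+1}}, \quad \rho(\xi,\eta,\tau) = (t+1)^2 \omega(r,z,t),
\]
so that $\rho_*$ becomes a stationary solution. The vorticity equation rewrites as $\partial_\tau \rho = L \rho + N(\rho)$, where $L$ is a Fokker--Planck-type operator of Ornstein--Uhlenbeck flavor and $N$ collects the quadratic advection and vortex stretching terms. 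In the weighted Hilbert space $L^2(\mu)$ with $\mu = \xi e^{(\xi^2+\eta^2)/4}$ dictated by hypothesis \eqref{116}, the spectrum of $L$ is expected to consist of eigenvalues $-k/2$, $k=0,1,2,\dots$, with the kernel spanned by the impulse mode $\rho_*$ and the next eigenspace, at $-1/2$, containing $\eta \rho_*$.

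The first step is to invoke Theorem \ref{theoremhoehereOrdnung}, which, read in self-similar variables, provides $\|\rho(\tau) - \I \rho_*\|_{L^2(\mu)} \lesssim e^{-\tau/2}$. To extract the second-order correction, I would introduce the normalized projection
\[
a(\tau) := \frac{\la \rho(\tau),\, \eta \rho_*\ra_\mu}{\la \eta \rho_*, \eta \rho_*\ra_\mu},
\]
and differentiate in $\tau$. Using the linear structure $L(\eta \rho_*) = -\frac12 \eta \rho_*$ together with the just-obtained $L^2(\mu)$-decay, the map $\tau \mapsto e^{\tau/2} a(\tau)$ turns out to be Cauchy with an exponential tail, so it converges to a finite limit $\alpha \in \R$ at rate $e^{-\tau/2}$. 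This is the constant promised in the corollary.

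The core step is then to bound the residual
\[
h(\tau) := \rho(\tau) - \I \rho_* - \alpha e^{-\tau/2} \eta \rho_*,
\]
which is orthogonal (up to exponentially small error) to the first two eigenspaces. Reapplying the entropy/spectral-gap estimate that powers Theorem \ref{theoremhoehereOrdnung}, but now on the subspace corresponding to eigenvalues $\le -1$, the linear part of $h$ decays like $e^{-\tau}$. The nonlinear source term $N(\I \rho_* + \alpha e^{-\tau/2}\eta\rho_* + h)$ produces a contribution that, after subtracting the parts absorbed into the first two modes, scales precisely as $e^{-\tau}$ in $L^2(\mu)$ — in exact resonance with the linear decay rate. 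A standard Duhamel integration thus yields $\|h(\tau)\|_{L^2(\mu)} \lesssim \tau e^{-\tau}$. Finally, undoing the self-similar transformation, and noting that the rescaled $L^2(\mu)$-norm relates to $L^2(\R^3)$ through a factor $(t+1)^{-5/4}$ (since $dx = 2\pi r\,dr\,dz$ and $\|\zeta_*(t)\|_{L^2(\R^3)} \sim (t+1)^{-5/4}$), the estimate $\tau e^{-\tau}$ translates into $\log(t+1)(t+1)^{-9/4}$ as claimed.

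The main obstacle, I expect, is the resonance bookkeeping in the last step: one must verify that the nonlinear contribution of $\I \rho_*$ with itself (and with the $\eta\rho_*$ mode) projects nontrivially onto the $-1$ eigenspace at rate exactly $e^{-\tau}$, while controlling all remaining quadratic terms using only the weighted $L^2(\mu)$-bound on $\rho - \I \rho_*$. Ensuring the appropriate orthogonality cancellations and handling the weight $\mu$ under the nonlinear advection (which involves the Biot--Savart-type operator inverting \eqref{relationvoritcityvelocity}) is the delicate part; fortunately the concentration assumption \eqref{116} exactly matches the Gaussian weight and makes all of the relevant nonlinear expressions manipulable by Cauchy--Schwarz and elementary pointwise bounds on $\rho_*$ and its derivatives.
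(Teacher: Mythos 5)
Your proposal arrives at the right answer, and the underlying machinery you invoke (self-similar rescaling, the weighted $L^2(\mu)$-space forced by \eqref{116}, the spectrum $\{k/2\}$ of the linearization, the projection $a(\tau)$, the resonance at the eigenvalue $1$ producing the $\tau e^{-\tau}$ factor, and the $(t+1)^{-5/4}$ Jacobian from undoing the rescaling) is all in line with the paper. The difference is one of economy, not of approach. You only extract from Theorem~\ref{theoremhoehereOrdnung} the first-order consequence $\|\rho-\I\rho_*\|_{L^2(\mu)}\lesssim e^{-\tau/2}$ and then re-derive the rest by hand: you show $e^{\tau/2}a(\tau)$ is Cauchy, subtract the resulting $\alpha e^{-\tau/2}\eta\rho_*$ mode, and re-run the spectral-gap/Duhamel argument on the residual. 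But Theorem~\ref{theoremhoehereOrdnung} with $n=0$ already delivers the full two-term expansion $\|f(\hat t)-a_{0,1}\psi_{0,1}-e^{-\hat t/2}a_{1,1}\psi_{1,1}\|_{L^2(\mu)}\lesssim \hat t e^{-\hat t}$ in one stroke (the hypothesis \eqref{assumption} is vacuous when $n=0$). Since $N_0=N_1=1$, $\psi_{0,1}=1$, $\psi_{1,1}=\hat z/\sqrt2$, and $a_{0,1}=\la f_0\ra_\mu=\I(\omega_0)$ by conservation of impulse, the corollary follows from the theorem after two lines of variable-conversion, with the weight $\sqrt{\hat r/\rho_*}\gtrsim 1$ dropped. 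So: your step~2 (the Cauchy argument for $a(\tau)$) and step~3 (the residual bound with resonance bookkeeping) are precisely the content of the proofs of Proposition~\ref{dynamicpoincare} and Theorem~\ref{theoremhoehereOrdnung}; they need not be reproved here. The only other cosmetic divergence is that you work directly with $\rho$ in $L^2(\xi e^{(\xi^2+\eta^2)/4})$ rather than with the relative quantity $f=h/\rho_*$ in $L^2(\mu)$; these are the same Hilbert space under the obvious isometry, so this changes nothing.
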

The constant $\alpha$ in the statement 
 can be explicitly computed.      This corollary immediately yields sharp bounds for the first correction of $\omega$, namely
          \begin{align}
               \|\omega(t) - \mathcal{I}\left(\omega_0\right)\zeta_*(t)\|_{L^2(\R^3)}\lesssim (t+1)^{-7/4},
          \end{align}
          for all $t\geq 0$. 
          Now that the first part of our main result is formulated, we like to compare it to already existing works on the asymptotic behavior of solutions of the axisymmetric Navier-Stokes equations without swirl.
          We already mentioned \cite{GallaySverak2015}, where Gallay and \v{S}ver\'ak proved, besides the well-posedness result for initial data in $L^1(\mathbb{H})$, convergence towards the self-similar profile of the linearized equation (for non-negative solutions with finite impulse). They proved the convergence in $L^p\left(\mathbb{H}\right)$ for every $p\in \left[1,\infty\right]$.
Recently, Vila derived a second-order asymptotic expansion for the vorticity in $L^p\left(\mathbb{H}\right)$ for $p \in \left[1,\infty\right]$, see \cite{Vila2022}  and obtained the same convergence rate (in a different norm), including the logarithmic resonance term.
More than twenty years ago, Gallay and Wayne investigated the large-time behavior of solutions to the three-dimensional Navier-Stokes equations without any symmetry assumptions, see \cite{GallayWayne2002}. In this setting they already developed second-order asymptotics for the vorticity and identified the first resonance term. In principle, our results can be obtained from their results by exploiting the symmetry in hindsight, but the invariant manifold techniques they use are much more complicated and elaborate.

     To the best of our knowledge, the higher-order asymptotic expansions in the following corollaries are new for the axisymmetric Navier-Stokes equations without swirl.  We consider first configurations  whose total fluid impulse is trivial. This way, the slowest modes in the frequency spectrum, see Theorem \ref{spectrum} below, will not be excited and we obtain better decay rates.

\begin{namedcorollary}\label{asymptoticsfuerimpulsgleichnull}
    Let $\omega$ be a solution of the axisymmetric vorticity equation \eqref{vorticityequation} with initial datum $\omega_0$ satisfying \eqref{116} and  $\mathcal{I}\left(\omega_0\right)=0$. Then there exist constants $\alpha, \beta$, $\gamma\in \R$ such that
    \begin{align}\mel
         \|\omega(t) 
         -\frac{\alpha z}{t+1}\zeta_*(t)-\frac{1}{t+1}\left(\beta\left(2-\frac{z^2}{t+1}\right)+\gamma\left(8-\frac{r^2}{t+1}\right)\right)\zeta_*\left(t\right)\|_{L^2(\R^3)}\\&
         \lesssim \log(t+1)(t+1)^{-11/4}.
    \end{align}
\end{namedcorollary}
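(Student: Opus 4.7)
The plan is to project out the lowest three spectral modes of the linearized flow and then apply Theorem~\ref{theoremhoehereOrdnung} to the remainder. I would first pass to the self-similar variables $\tau=\log(1+t)$, $\hat r=r/\sqrt{1+t}$, $\hat z=z/\sqrt{1+t}$, rescale via $W(\hat r,\hat z,\tau) = (1+t)^2\omega(r,z,t)$, and rewrite the vorticity equation \eqref{vorticityequation} in the form $\partial_\tau W = \L W + N(W)$, where $\L$ is the drift--diffusion generator corresponding to the linearization \eqref{linearizedvorticityequation} in self-similar coordinates. By Theorem~\ref{spectrum}, this operator has pure point spectrum with eigenvalues $-k/2$ for $k=0,1,2,\dots$ on a Gaussian-weighted $L^2$-space whose finiteness is ensured by \eqref{116}. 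The three lowest eigenspaces are spanned by $\rho_*$ (eigenvalue $0$), by $\hat z\rho_*$ (eigenvalue $-1/2$), and by the pair $(2-\hat z^2)\rho_*,\ (8-\hat r^2)\rho_*$ (eigenvalue $-1$).

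Because $\I(\omega_0)=0$, the conservation identity \eqref{111} forces the component of $W(\tau)$ along $\rho_*$ to vanish for all times. The constant $\alpha$ would then be defined so that $\alpha\hat z\rho_*$ captures the $-1/2$-mode of $W(0)$ augmented by the long-time contribution of the nonlinear forcing on that eigenspace; the constants $\beta$ and $\gamma$ are defined analogously as the two coefficients on the $-1$-eigenspace. All three admit explicit representations as integral moments of $\omega_0$ against the corresponding dual eigenfunctions, exactly in the spirit of the single constant $\alpha$ that appeared in Corollary~\ref{secondorderasymptoticsforomega}.

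With these choices, the rescaled counterpart of the relative quantity
\[
\eta(t) = \omega(t) - \frac{\alpha z}{1+t}\zeta_*(t) - \frac{1}{1+t}\left(\beta\Bigl(2-\frac{z^2}{1+t}\Bigr) + \gamma\Bigl(8-\frac{r^2}{1+t}\Bigr)\right)\zeta_*(t)
\]
has vanishing initial projection onto the eigenspaces at $0$, $-1/2$ and $-1$, so Theorem~\ref{theoremhoehereOrdnung} applies to it. The spectral gap from $-1$ to the next eigenvalue $-3/2$ then delivers the decay rate $e^{-3\tau/2}$ for the weighted $L^2$-norm of the rescaled remainder, which translates back in the original variables to $(1+t)^{-3/2}\|\zeta_*(t)\|_{L^2(\R^3)}\sim (1+t)^{-11/4}$.

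The main obstacle will be the resonance produced by the quadratic interaction between the $-1/2$-mode and the $-1$-eigenspace through $N(W)$: this generates a forcing on the $-3/2$-eigenspace whose temporal rate $e^{-3\tau/2}$ matches the homogeneous decay on that eigenspace, and a Duhamel integration therefore contributes the factor $\tau=\log(1+t)$ visible in the statement. To close the argument, one has to exploit the parity of the eigenfunctions in $\hat z$, which rules out spurious resonances (for instance, the self-interaction of $\hat z\rho_*$ is odd in $\hat z$ and hence has no projection onto the even-in-$\hat z$ eigenfunctions at level $-1$), and to book-keep precisely which bilinear combinations produce resonant versus non-resonant forcing. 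Combining this inventory of resonances with the weighted $L^2$-energy estimates built into Theorem~\ref{theoremhoehereOrdnung} then yields the claimed bound.
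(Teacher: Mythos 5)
Your proposal identifies the correct spectral structure and arrives at the correct final rate, and you rightly notice that conservation of the impulse \eqref{111} together with $\I(\omega_0)=0$ suppresses the zeroth mode; the paragraph on the resonance and the origin of the logarithmic factor also describes accurately the mechanism built into Theorem~\ref{theoremhoehereOrdnung}. The central logical step, however, is misformulated. You propose to form the residual $\eta$ by subtracting the $\alpha$-, $\beta$- and $\gamma$-terms and then ``apply Theorem~\ref{theoremhoehereOrdnung} to it.'' But Theorem~\ref{theoremhoehereOrdnung} is stated for a solution $f$ of \eqref{equationf}; the rescaled counterpart of your $\eta$ does not solve that equation, so the theorem cannot be invoked for $\eta$. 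Moreover, the hypothesis \eqref{assumption} is an \emph{asymptotic} condition $\lim_{\hat t\to\infty} e^{\lambda_k\hat t}\langle f(\hat t),\psi_{k,j}\rangle_\mu = 0$, not a statement about initial projections. If you truly needed the projections onto the $\lambda_1$- and $\lambda_2$-eigenspaces to vanish asymptotically (the case $n=3$), that would be a much stronger hypothesis which the assumption $\I(\omega_0)=0$ does not grant; indeed those very limits are, up to normalization, the constants $\alpha,\beta,\gamma$, and they are generically nonzero.

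The actual proof is shorter and avoids these problems: apply Theorem~\ref{theoremhoehereOrdnung} with $n=1$ directly to $f$. The only thing to verify is \eqref{assumption} for $k=0$, and since $\langle f(\hat t),\psi_{0,1}\rangle_\mu = \I(\omega(t))=\I(\omega_0)=0$ for all $t$ by \eqref{111}, the required limit vanishes trivially. The theorem's \emph{conclusion} then already produces the two correction terms on the $\lambda_1=1/2$ and $\lambda_2=1$ eigenspaces (the latter having multiplicity two, giving both $\beta$ and $\gamma$), and the bound $\hat t\, e^{-\lambda_3\hat t}=\hat t\, e^{-3\hat t/2}$ translates, exactly as in the proof of Corollary~\ref{secondorderasymptoticsforomega}, into $\log(t+1)(t+1)^{-11/4}$. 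Finally, your claim that $\alpha,\beta,\gamma$ are explicit integral moments of $\omega_0$ is inaccurate: by \eqref{114} each $a_{n,j}$ is the limit $\lim_{\hat t\to\infty}e^{\lambda_n\hat t}\langle f(\hat t),\psi_{n,j}\rangle_\mu$ and thus depends on the full nonlinear evolution, not merely on the initial datum, with the sole exception of the conserved zeroth moment $a_{0,1}=\I(\omega_0)$.
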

Also the  constants $\beta$ and $\gamma$ can be computed explicitly,
see Theorem \ref{theoremhoehereOrdnung} and the proof of the corollary in the beginning of section \ref{proofs}.

For our last corollary, we additionally demand that the initial datum $\omega_0$ is an even function in the $z$-direction. This geometric assumption annihilates the impact of the next eigenfunction. Thus, no additional correction term arises.
\begin{namedcorollary}\label{asymptoticsfuergeradeangangsdaten}
     Let $\omega$ be given as in Corollary \ref{asymptoticsfuerimpulsgleichnull}. Additionally assume that $\omega_0(r,z)=\omega_0(r,-z)$. Then it holds that
     \begin{align}\mel
          \|\omega(r,z,t) 
     -\frac{1}{t+1}\left(\beta\left(2-\frac{z^2}{t+1}\right)+\gamma\left(8-\frac{r^2}{t+1}\right)\right)\zeta_*\left(t\right)\|_{L^2(\R^3)}\\
     &
     \lesssim \log (t+1)(t+1)^{-13/4}.
     \end{align}
\end{namedcorollary}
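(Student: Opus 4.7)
The proof follows the blueprint of Corollary~\ref{asymptoticsfuerimpulsgleichnull}, but pushes the asymptotic expansion supplied by Theorem~\ref{theoremhoehereOrdnung} one eigenvalue level further, down to the eigenvalue $-2$ of the linearized self-similar generator $\mathcal{L}$. At this level the remainder is of size $\log(t+1)(t+1)^{-13/4}$. The eigenfunctions of $\mathcal{L}$ that enter this expansion split into two parity classes in $\hat z$: the $z$-even profiles $\rho_*$ (eigenvalue $0$) and $(2-\hat z^2)\rho_*$, $(8-\hat r^2)\rho_*$ (eigenvalue $-1$); and the $z$-odd profiles $\hat z\rho_*$ (eigenvalue $-1/2$) together with two further odd modes of the form $\hat z(a+b\hat z^2+c\hat r^2)\rho_*$ at eigenvalue $-3/2$.

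The plan is then to argue that under the hypothesis $\omega_0(r,z)=\omega_0(r,-z)$ the coefficient of every $z$-odd eigenmode in this expansion vanishes. By the construction in Theorem~\ref{theoremhoehereOrdnung}, each such coefficient is a scalar multiple of an integral $\int p(r,z)\,\omega_0(r,z)\,dr\,dz$ whose polynomial weight $p$ is odd in $z$; under the evenness assumption on $\omega_0$ these integrals vanish identically. In particular the dipole coefficient $\alpha$ from Corollary~\ref{asymptoticsfuerimpulsgleichnull} is zero, as are the two coefficients at eigenvalue $-3/2$. What is left of the expansion is precisely the quadratic approximation $\tfrac{1}{t+1}(\beta(2-\tfrac{z^2}{t+1})+\gamma(8-\tfrac{r^2}{t+1}))\zeta_*(t)$ written in the statement, and the error estimate follows directly from Theorem~\ref{theoremhoehereOrdnung}.

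The main technical obstacle is to confirm that the coefficients supplied by Theorem~\ref{theoremhoehereOrdnung} genuinely take this pure initial-moment form. The delicate point is that the nonlinearity $u\cdot\nabla\omega - u_r\omega/r$ does \emph{not} preserve the even-in-$z$ symmetry: starting from an even $\omega_0$, the solution $\omega(t)$ develops a $z$-odd component for $t>0$, driven by mixed terms such as $v_r^e\omega^{\mathrm{even}}/r$ coming from the even-in-$z$ part of the velocity. The saving grace is that this nonlinearly generated odd contribution is quadratic in $\omega$ and, after tracing it through the Duhamel representation underlying Theorem~\ref{theoremhoehereOrdnung}, enters at strictly higher order in $(t+1)^{-1/2}$ than the linear $z$-odd modes. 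It therefore influences only the remainder and not the coefficients of the expansion, and so is absorbed into the stated $\log(t+1)(t+1)^{-13/4}$ error.
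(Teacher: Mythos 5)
You correctly identify a genuine weakness in the symmetry argument, and this observation is more perceptive than the paper's own proof: evenness of $\omega$ in $z$ is \emph{not} preserved by the nonlinear flow. If $\omega$ is even, the Biot--Savart system \eqref{relationvoritcityvelocity} forces $u_r$ to be odd and $u_z$ to be even in $z$, so the nonlinearity $u\cdot\nabla\omega-\frac{u_r}{r}\omega$ is \emph{odd} in $z$ and immediately generates an odd component of $\omega$. The reflection $(r,z)\mapsto(r,-z)$, accompanied by $u_z\mapsto -u_z$, is a symmetry of \eqref{vorticityequation} whose fixed points are the $z$-\emph{odd} vorticities, not the even ones; an even vortex ring centered at $z=0$ simply translates. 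The paper's assertion that uniqueness propagates evenness is therefore incorrect as stated.

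However, your proposed repair does not close the gap. The Duhamel bookkeeping you sketch only shows that
\[
a_{1,1}=\lim_{\hat t\to\infty}e^{\hat t/2}\ltwoscalarproduct{f(\hat t)}{\psi_{1,1}}=\lim_{t\to\infty}\tfrac{1}{\sqrt 2}\int zr^2\omega(t)\,dr\,dz
\]
\emph{exists}, not that it \emph{vanishes}. While $\int zr^2\omega_0\,dr\,dz=0$ by evenness, this quantity is not conserved: integrating \eqref{vorticityequation} against $zr^2$ (the viscous terms drop out, and one uses $\nabla\cdot(ru)=0$) gives
\[
\frac{d}{dt}\int zr^2\omega\,dr\,dz=2\int r\,u_z^2\,dr\,dz-\int r\,u_r^2\,dr\,dz,
\]
which is generically nonzero already at $t=0$ for even $\omega_0$. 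In the Duhamel integral, the source $e^{s/2}\ltwoscalarproduct{G(s)}{\psi_{1,1}}$ is of size $e^{-s}$, so it contributes an $O(1)$, not a higher-order, amount to $a_{1,1}$. A nonzero $a_{1,1}$ produces a term of size $(t+1)^{-7/4}$ in $L^2(\R^3)$, far above the claimed remainder $\log(t+1)(t+1)^{-13/4}$ (and above the $\beta,\gamma$ terms at $(t+1)^{-9/4}$). So either one must exhibit an actual cancellation in the time integral $\int_0^\infty\bigl(2\int r u_z^2-\int r u_r^2\bigr)\,dt$ under the stated hypotheses, or the corollary must retain the dipole term $\alpha z/(t+1)\cdot\zeta_*(t)$. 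Your proposal, like the paper's argument, leaves this unresolved.
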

In our work \cite{SeisWinkler22}, we imposed similar (and even more extensive) geometric conditions to obtain higher order asymptotic expansions of solutions of the thin-film equation.

In the following subsection, we will transfer equation \eqref{vorticityequation} into a more manageable equation for the relative quantity in self-similar variables.

\subsection{Relative quantity and main result}\label{relativequantity}

The convergence result \eqref{convergencevorticity} motivates the following change of variables into self-similar ones. We define   
     \begin{equation}\label{changeofvariables}
     \begin{aligned}
          &h\left( \hat{t},\hat{r},\hat{z} \right)= (t+1)^2\omega\left(t,r,z\right)  \\ \text{and} \quad
          &m\left(\hat{t},\hat{r},\hat{z}\right) = (t+1)^{3/2} u\left(t,r,z\right)\\
          \text{where} \quad &\hat{t}=\log(t+1), \quad \hat{r}=(t+1)^{-1/2}r,\quad\hat{z}=(t+1)^{-1/2}z.
     \end{aligned}
\end{equation}
In self-similar variables the vorticity equation \eqref{vorticityequation} takes the form
     \begin{align}\label{equationh}
          \partial_{\hat{t}} h =\hat{\Delta} h +\frac{1}{\hat{r}}\partial_{\hat{r}}h+ \frac12 \myvector{\hat{r}\\\hat{z}}\cdot \nabla h +2h-\frac{h}{\hat{r}^2} +e^{-\hat{t}}\left\{ \frac{1}{\hat{r}} m_rh- m\cdot \nabla h\right\}
     \end{align}
and the relation between the vorticity $h$ and the velocity field $m$ remains determined by the linear elliptic system \eqref{relationvoritcityvelocity}. Note that the impulse $\mathcal{I}$ is preserved under the above change of variables, that is 
\begin{equation}\label{115}
\int r^2\omega(t,r,z)  \, drdz  = \int\hat{r}^2 h\left(\hat{r},\hat{z},\hat{t}\right) d\hat{r}d\hat{z},
\end{equation}
 and therefore, the conservation law \eqref{111} is translated into
\begin{equation}
\label{112}
\mathcal{I}(h(\hat t)) = \mathcal{I}(h_0),
\end{equation}
for any $\hat t\ge 0$. 
Hence, 
  the convergence result \eqref{convergencevorticity} can be rephrased in the new variables as 
  \[
  h(\hat t)\to h_*\coloneqq \mathcal{I}\left(h_0\right)\rho_*\quad \mbox{in $L^p\left(\mathbb{H}\right)$ as }\hat t\to \infty.
  \]
In particular, any Lebesgue norm is uniformly controlled, that is
    \begin{align}\label{boundonh}
          \left\|h(\hat t)\right\|_{L^p\left(\mathbb{H}\right)} \leq C   \quad \text{for all }\hat  t >0.
     \end{align}
With these bounds at hand, we will derive sharp rates for the convergence of $h$ towards $h_*$. Our arguments can be iterated to obtain convergence rates for higher order asymptotics as well (under additional assumptions on the initial data).

To do so, it turns out to be useful to work with the relative quantity $f\coloneqq h/\rho_*$. Its evolution is described by
     \begin{align}\label{equationf}
          \partial_{\hat{t}}f+\mathcal{L}f=e^{-\hat{t}}\left\{ \frac12 \myvector{\hat{r}\\\hat{z}}\cdot m f - m\cdot \hat{\nabla} f\right\},
     \end{align}
where 
\[
\mathcal{L}f = -\hat{\Delta} f + \frac12 \myvector{\hat{r}\\\hat{z}} \cdot \nabla f - \frac{3}{\hat{r}} \partial_{\hat{r}}f.
\]
The linear operator $\mathcal{L}$ is symmetric and non-negative in the Hilbert space $L^2\left(\mu\right)=L^2\left(\mathbb{H},\mu \right)$, equipped with the standard scalar product $\ltwoscalarproduct{f}{g} \coloneqq \int fg\, d\mu$, where $d\mu = \frac{1}{16\sqrt{\pi}}\hat{r}^3 e^{-\frac{\hat{r}^2+\hat{z}^2}{4}}d\hat{r}d\hat{z}=\hat{r}^2\rho_*d\hat{r}d\hat{z}$. Indeed, the linear operator can be rewritten as $\mathcal{L}f = -\mu^{-1}\nabla \cdot \left(\mu \nabla f\right)$ and it holds that
     \begin{align}
         \int g\mathcal{L}fd\mu = \int \nabla g \cdot \nabla fd\mu,
     \end{align}
     for sufficiently smooth test functions $f$ and $g$.
Note that $\mu$ is a probability measure.

It follows immediately from the definitions of both the relative variable and the measure, that the mean of $f$ is preserved during the evolution. Indeed, it holds that
\begin{equation}
\label{110}
\la f\ra_{\mu} \coloneqq \int f\, d\mu = \mathcal{I}(h),
\end{equation}
and the fluid impulse is preserved, cf.~\eqref{112}, so that
\begin{equation}
\label{113}
\la f(\hat t)\ra_{\mu} = \la f_0\ra_{\mu},
\end{equation}
for any $\hat t\ge0$.

 The well-posedness result for the vorticity equation \eqref{vorticityequation} ensures the existence of a solution $\omega$ and this solution translates into a solution $f$ of equation \eqref{equationf}. Furthermore, this solution $f$ satisfies $\left\|f(\hat{t})\right\|_{L^1\left(\mu\right)} \le C$ because of the uniform control of $\|r^2 \omega(t)\|_{L^1}$ established in \cite{Vila2022}. 
 Due to the suitably chosen change of variables in \eqref{changeofvariables}, the initial time $t=0$ is mapped to $\hat{t}=0$ and the concentration assumption \eqref{116} on $\omega_0$ is equivalent to the fact that $f_0\in L^2(\mu)$.
An $L^2$ theory for \eqref{equationf} is considered in Lemma~\ref{P1} for completeness.

 A crucial part of our approach consists in explicitly computing the spectrum of $\mathcal{L}$ and of the corresponding eigenfunctions. Even though the equation \eqref{equationf} is nonlinear, the leading order convergence rates of $f$ are given by the eigenvalues of the linear operator. The nonlinear parts can be considered as error terms and can be suitably controlled. In the following theorem, we identify the spectrum of the linear operator and the associated eigenfunctions.

\begin{namedtheorem}[Spectrum of the linear operator]\label{spectrum}
     The spectrum $\sigma(\mathcal{L})$ of the linear operator $\mathcal{L}$ is purely discrete in $L^2\left(\mu\right)$. The eigenvalues are given by
          \begin{align}
               \lambda_{\ell,n} = \ell + \frac{n}{2} \quad \text{ where } \ell,n \in \mathbb{N}_0.
          \end{align}
     The corresponding eigenfunctions are polynomials of the form
          \begin{align}
               \psi_{\ell,n}(r,z)=c_{\ell,n} L_{\ell}^{(1)}\left(\frac{r^2}{4}\right)  H_n\left(\frac{z}{2}\right),
          \end{align}
     where  $H_n$ denotes the $n$-th Hermite polynomial and $L_{\ell}^{(1)}$ the $\ell$-th associated Laguerre polynomial.
     Hence, the eigenvalues $\frac{2m}{2}$ and $\frac{2m+1}{2}$ have multiplicity $m+1$.
     Furthermore, the eigenfunctions $\left\{ \psi_{\ell,n}  : \ell,n \in \N_0\right\}$ form an orthonormal basis of $L^2\left(\mu\right)$, where the constant $c_{\ell,n}$ is merely a normalization factor.
\end{namedtheorem}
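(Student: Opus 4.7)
The strategy I would pursue exploits the tensor product structure of the weight $\mu$ to reduce the spectral problem for $\mathcal{L}$ to two classical one-dimensional problems on the half-line and the real line.

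First, I would observe that up to the normalization constant, $d\mu = \mu_1(\hat r)\mu_2(\hat z)\,d\hat r\,d\hat z$ with $\mu_1(\hat r)\propto \hat r^3 e^{-\hat r^2/4}$ and $\mu_2(\hat z)\propto e^{-\hat z^2/4}$. A direct computation of $\mu_1'/\mu_1 = 3/\hat r - \hat r/2$ shows that the weighted-divergence form of $\mathcal{L}$ decouples as
\begin{equation}
\mathcal{L} = \mathcal{L}_r + \mathcal{L}_z, \qquad \mathcal{L}_r = -\mu_1^{-1}\partial_{\hat r}(\mu_1 \partial_{\hat r}\cdot), \qquad \mathcal{L}_z = -\mu_2^{-1}\partial_{\hat z}(\mu_2 \partial_{\hat z}\cdot),
\end{equation}
so that $\mathcal{L}$ is the (closure of the) sum on the algebraic tensor product of polynomials. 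This matches the definition of $\mathcal{L}$ given above, and it makes the symmetry and non-negativity on the natural form domain transparent.

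Next, I would identify the two factors. The operator $\mathcal{L}_z$ is (a rescaling of) the classical Ornstein--Uhlenbeck generator on $L^2(\mathbb{R},\mu_2)$ whose spectrum is $\{n/2:n\in\mathbb{N}_0\}$ with eigenfunctions $H_n(\hat z/2)$; this is a standard fact. For the radial operator, I would perform the substitution $x = \hat r^2/4$ and let $f(\hat r) = g(x)$. A short chain-rule computation gives
\begin{equation}
\mathcal{L}_r f = -x\,g''(x) - (2-x)\,g'(x),
\end{equation}
which is exactly the associated Laguerre operator with parameter $\alpha = 1$. Its eigenfunctions are the Laguerre polynomials $L_\ell^{(1)}(x)$ with eigenvalues $\ell\in\mathbb{N}_0$, so $L_\ell^{(1)}(\hat r^2/4)$ are eigenfunctions of $\mathcal{L}_r$ in $L^2(\mathbb{H}_+,\mu_1)$. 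Consequently, the products $\psi_{\ell,n} = c_{\ell,n} L_\ell^{(1)}(\hat r^2/4) H_n(\hat z/2)$ are eigenfunctions of $\mathcal{L}$ with eigenvalue $\ell + n/2$, and the constants $c_{\ell,n}$ are fixed by the classical $L^2$-normalizations of the two polynomial families.

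To conclude that these form an orthonormal basis and that the spectrum is purely discrete, I would invoke the standard completeness of $\{H_n\}$ in $L^2(\mathbb{R},\mu_2)$ and of $\{L_\ell^{(1)}\}$ in $L^2(\mathbb{R}_+,xe^{-x}dx)$ (which, via the change of variables, yields completeness of $\{L_\ell^{(1)}(\hat r^2/4)\}$ in $L^2(\mathbb{H}_+,\mu_1)$). The tensor-product argument then gives an orthonormal basis of $L^2(\mu)$ consisting of eigenfunctions. Since the eigenvalues $\lambda_{\ell,n}=\ell+n/2$ have no finite accumulation point and $\mathcal{L}$ is non-negative and symmetric on the polynomial core, its Friedrichs extension is self-adjoint with pure point spectrum exactly $\{\ell+n/2\}$ of finite multiplicities (the multiplicity count being the number of decompositions $\lambda = \ell + n/2$ with $\ell,n\in\mathbb{N}_0$, which for $\lambda = m/2$ equals $\lfloor m/2\rfloor +1$).

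The main obstacle I anticipate is a careful functional-analytic justification that polynomials are indeed a core for $\mathcal{L}$ in $L^2(\mu)$ and that no other spectrum exists beyond that captured by the basis. This can be handled either by directly verifying essential self-adjointness on polynomials using the Gaussian weight (polynomials are dense by standard Hermite/Laguerre completeness), or alternatively by a compactness argument: a Poincaré-type inequality with respect to $\mu$ together with the Gaussian decay yields compact embedding of the form domain into $L^2(\mu)$, so the resolvent of the Friedrichs extension is compact and the spectrum is purely discrete, which together with the explicit eigenfunctions found above closes the proof.
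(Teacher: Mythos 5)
Your proposal is correct and follows essentially the same route as the paper: decompose $\mathcal{L}$ into radial and axial pieces, rescale via $s=\hat r^2/4$ and $y=\hat z/2$ to recognize the Laguerre (with parameter $\alpha=1$) and Hermite differential operators, take tensor products to build the eigenbasis, and invoke classical completeness plus standard functional analysis for self-adjointness and discreteness of the spectrum. The only difference is that you spell out the weighted-divergence form and the self-adjointness/compact-resolvent justification in slightly more detail, whereas the paper defers these points to a textbook reference.
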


We refer to the Chapters 18.1 and 18.3 in \cite{ArfkenHarrisWeber2012} for  definitions and properties of the Hermite and Laguerre polynomials.
 
For the rest of the paper we relabel the eigenvalues in a strictly increasing order, that is $\sigma(\mathcal{L}) = \left\{\lambda_k = \frac{k}{2}:  k\in \N_0\right\}$ with corresponding eigenfunctions $\psi_{k,j}$ for $j \in \left\{1,\dots,N_k\right\}$.

Before stating the main result, we like to illustrate the heuristics of our approach. Equivalently to the convergence of $h$ towards $h_*$, we expect the relative quantity $f$ to converge to its mean $\la f\ra_{\mu}= \mathcal{I}(h_0)$, such that the relative error $f- \la f\ra_{\mu}$ decays to zero. Notice that the limit value $\la f\ra_{\mu}= \mathcal{I}(h_0)$ of $f(t)$  is proportional to the projection $\ltwoscalarproduct{f}{1}$ of $f$ onto the eigenfunction corresponding to the smallest eigenvalue $\lambda_0=0$. As a result, by considering the relative error $f(t)-\la f\ra_{\mu}$ we remove  $\lambda_0$ from the spectrum and we show that the quantity $f(t)-\la f\ra_{\mu}$ decays exponentially fast with rate of the next largest eigenvalue $\lambda_1=1/2$.

In order to improve on the rate of convergence, we eliminate the leading order term by supposing that the fluid impulse $\la f_0\ra_{\mu}$ is zero, cf.~\eqref{110}, \eqref{113}. This way, the relative quantity $f(t)$ will itself vanish asymptotically. The following  abstract result yields the precise leading order asymptotics together with the leading order corrections under the assumption that the first $n$ eigenmodes are not excited.

\begin{theorem}\label{theoremhoehereOrdnung}
Let $n\in\N_0$ be given and      let $f$ be a solution of \eqref{equationf} with initial datum $f_0\in L^2(\mu)$ such that
          \begin{align}\label{assumption}
               \lim \limits_{\hat{t}\to\infty} e^{\lambda_k\hat{t}}\ltwoscalarproduct{f\left(\hat{t}\right) }{\psi_{k,j}} = 0
          \end{align}
     for all $k=0,\dots,n-1$ and $j=1,\dots,N_k$.
     Then there exist  coefficients $a_{n,1},\dots,a_{n,N_n}$ and $a_{n+1,1},\dots,
     a_{n+1,N_{n+1}}$ such that
          \begin{align}
               \ltwomunorm{f\left(\hat{t}\right)- e^{-\lambda_n\hat{t}}\sum_{j=1}^{N_n}a_{n,j}\psi_{n,j}-e^{\lambda_{n+1}\hat{t}}\sum_{j=1}^{N_{n+1}}a_{n+1,j}\psi_{n+1,j}} \lesssim \hat{t}e^{-\lambda_{n+2}\hat{t}} \quad \text{ for all }\hat{t}\geq 0.
          \end{align}
     The coefficients are determined by
     \begin{equation}
     \label{114}
     a_{n,j}\coloneqq \lim \limits_{\hat{t}\to\infty} e^{\lambda_n\hat{t}}\ltwoscalarproduct{f\left(\hat{t}\right)}{\psi_{n,j}}.
     \end{equation}
\end{theorem}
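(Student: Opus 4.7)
The plan is to analyze the evolution of the spectral coefficients $c_{k,j}(\hat t) := \ltwoscalarproduct{f(\hat t)}{\psi_{k,j}}$ via Duhamel's formula and iteratively extract the leading-order modes. Since $\{\psi_{k,j}\}$ is an orthonormal basis of $L^2(\mu)$, writing $f(\hat t) = \sum_{k,j} c_{k,j}(\hat t)\psi_{k,j}$ and pairing \eqref{equationf} with $\psi_{k,j}$ produces a scalar ODE for each coefficient,
\[
\dot c_{k,j} + \lambda_k c_{k,j} = e^{-\hat t} F_{k,j}(\hat t),\qquad F_{k,j}(\hat t) := \ltwoscalarproduct{N(f)}{\psi_{k,j}},
\]
where $N(f) = \tfrac12\myvector{\hat r\\\hat z}\cdot m f - m\cdot\hat\nabla f$ and $m$ is recovered from $h=\rho_* f$ through the Biot--Savart system \eqref{relationvoritcityvelocity}.

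First I would establish the a priori decay $\ltwomunorm{f(\hat t)}\lesssim e^{-\lambda_n\hat t}$. The assumption \eqref{assumption} says that the first $n$ eigenmodes decay strictly faster than their linear rates, so asymptotically $f$ effectively lives in the spectral subspace associated to eigenvalues $\geq \lambda_n$, on which $\ltwoscalarproduct{f}{\L f}\geq \lambda_n \ltwomunorm{f}^2$. A standard $L^2(\mu)$ energy estimate for \eqref{equationf}, combined with the uniform bounds from Lemma~\ref{P1} and the smallness of the nonlinear source encoded in the prefactor $e^{-\hat t}$, then yields the claimed exponential decay, possibly after absorbing a projection onto the finite-dimensional span of $\{\psi_{k,j}\}_{k<n}$ which decays faster by hypothesis.

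Next I would extract the coefficients $a_{n,j}$. Duhamel gives
\[
e^{\lambda_n\hat t}c_{n,j}(\hat t) = c_{n,j}(0) + \int_0^{\hat t} e^{\lambda_n s - s} F_{n,j}(s)\,ds.
\]
A Biot--Savart estimate in the weighted space bounding $|F_{n,j}(s)|\lesssim \ltwomunorm{f(s)}^2$, together with the decay just established, makes the integrand summable at rate $e^{-(1+\lambda_n) s}$. Hence the limit $a_{n,j}$ in \eqref{114} exists and moreover $|e^{\lambda_n\hat t}c_{n,j}(\hat t)-a_{n,j}|\lesssim e^{-\hat t}$. For the higher modes $k\geq n+1$ one estimates $c_{k,j}(\hat t)$ directly from Duhamel: the integrand $e^{\lambda_k s - s}F_{k,j}(s)$ is bounded by $e^{(\lambda_k-1-2\lambda_n)s}$, giving $|c_{k,j}(\hat t)|\lesssim e^{-\lambda_k\hat t}$ when $\lambda_k < 2\lambda_n+1$ and a logarithmic correction $\hat t\,e^{-\lambda_k\hat t}$ when equality holds, which is precisely the source of the $\hat t$ factor in the statement.

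Finally I would iterate. Setting $g(\hat t) := f(\hat t) - e^{-\lambda_n\hat t}\sum_j a_{n,j}\psi_{n,j}$, one checks that $g$ still satisfies an equation of the form \eqref{equationf} with an additional explicit forcing of order $e^{-(\lambda_n+1)\hat t}$, and by construction the first $n+1$ spectral modes of $g$ fulfill the vanishing assumption \eqref{assumption} with $n$ replaced by $n+1$. Running the same two steps with $g$ in place of $f$ extracts the coefficients $a_{n+1,j}$ and reduces the analysis to a remainder which is estimated in $L^2(\mu)$ by $\hat t\, e^{-\lambda_{n+2}\hat t}$.

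The main obstacle will be the Biot--Savart estimate controlling $|F_{k,j}(\hat t)|$ by a power of $\ltwomunorm{f(\hat t)}$: the velocity $m$ is reconstructed nonlocally from $h=\rho_* f$, and transferring bounds between weighted spaces requires some care, particularly near the axis $\{\hat r = 0\}$ where the weight $\hat r^3\rho_*$ degenerates and where the eigenfunctions $\psi_{k,j}$ enjoy a compensating vanishing. A second delicate point is tracking the resonances in the Duhamel integrals sharply enough to yield only the logarithmic loss $\hat t$ in the final remainder, rather than a polynomial loss.
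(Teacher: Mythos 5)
Your high-level plan correctly identifies the ingredients (decay estimate, extraction of coefficients, handling of the resonance at $\lambda_{n+2}=\lambda_n+1$), and the first step (establishing $\ltwomunorm{f(\hat t)}\lesssim e^{-\lambda_n\hat t}$) is essentially the paper's Proposition~\ref{decayoff}. However, there are two genuine gaps in the coefficient-by-coefficient Duhamel strategy.

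First, the claimed quadratic bound $|F_{k,j}(s)|\lesssim\ltwomunorm{f(s)}^2$ is not available. After the integration by parts \eqref{integrationbypartsidentity}, $F_{k,j}=\int\nabla\psi_{k,j}\cdot m f\,d\mu+2\int r^{-1}m_r\psi_{k,j}f\,d\mu$, and the only velocity control the paper establishes is the \emph{uniform} bound $\|m\|_{L^\infty(\H)}\lesssim 1$ (Lemma~\ref{estimatem}); a bound like $\|m\|_{L^\infty}\lesssim\ltwomunorm{f}$ would require controlling $\|h\|_{L^1}$ and $\|h\|_{L^\infty}$ by $\ltwomunorm{f}$, which fails near the axis where the weight $\hat r^3\rho_*$ degenerates. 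So only $|F_{k,j}|\lesssim\ltwomunorm{f}\lesssim e^{-\lambda_n s}$ is available, making the Duhamel integrand $\sim e^{(\lambda_k-1-\lambda_n)s}$ rather than $e^{(\lambda_k-1-2\lambda_n)s}$. Your predicted resonance threshold $\lambda_k=2\lambda_n+1$ is therefore incorrect for $n\geq1$ (it coincides with the true threshold $\lambda_{n+2}=\lambda_n+1$ only when $n=0$), and the logarithmic factor in the theorem is explained by the linear, not quadratic, estimate on the nonlinearity.

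Second, and more structurally, estimating each $c_{k,j}(\hat t)$ separately and then summing does not yield the asserted $L^2(\mu)$ bound on the remainder. The bound on $|F_{k,j}|$ carries a factor $\|\nabla\psi_{k,j}\|_{L^2(\mu)}+\|r^{-1}\psi_{k,j}\|_{L^2(\mu)}\sim\sqrt{\lambda_k}$, so Duhamel gives $|c_{k,j}(\hat t)|\lesssim\frac{\sqrt{\lambda_k}}{\lambda_k-\lambda_{n+2}}\,e^{-\lambda_{n+2}\hat t}$ for $k>n+2$, and the resulting $\ell^2$ sum $\sum_k N_k\,\lambda_k/(\lambda_k-\lambda_{n+2})^2$ diverges ($N_k\sim k$, $\lambda_k\sim k/2$). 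This is precisely why the paper does \emph{not} estimate modes individually: it tests the equation for the full remainder $f-\psi_n-\psi_{n+1}$ against itself, and converts the resulting Dirichlet form to an $L^2$ norm via the dynamically improved Poincar\'e inequality (Proposition~\ref{dynamicpoincare}), which packages all high-frequency modes at once. Your Duhamel idea does work for the \emph{finitely many} low modes $k\le n+1$ — in fact the paper's Cauchy-sequence construction of the $a_{n,j}$ and $a_{n+1,j}$ inside Proposition~\ref{dynamicpoincare} is exactly that — but it cannot replace the energy estimate for the infinite tail.
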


Similar statements were derived earlier, for instance,  in the context of the porous medium equation \cite{Seis15}, the thin film equation \cite{SeisWinkler22}, and the fast diffusion equation \cite{DenzlerKochMcCann15,ChoiMcCannSeis22}.
Notice that the number of correction terms in the theorem is restricted to two. Furthermore, a polynomial factor $\hat{t}$ has to be added on the right-hand side if both correction terms are built-in. It is caused by resonances. These two limitations originate from the distribution of the eigenvalues and the structure of the nonlinear part of \eqref{equationf}. Due to the different scaling, the quadratic part of the vorticity equation \eqref{vorticityequation} is transformed  under the performed change of variables  into the exponentially decaying part of \eqref{equationf}.
More particularly, the nonlinearity does not decay twice as fast as the solution $f$ but, due to the prefactor $e^{-\hat{t}}$, even faster. Indeed, if $f$ decays like $e^{-\lambda \hat{t}}$, we expect the nonlinear part to decay like $e^{-\left(1+\lambda\right)\hat{t}}$. Thus, only correction terms corresponding to eigenvalues $\tilde{\lambda}$ with $\lambda\leq \tilde{\lambda} < 1+\lambda$ can be handled suitably. Because of the structure of the eigenvalues, this limits their number to two. In addition, the resonance term appears since $1+\lambda$ is an eigenvalue whenever $\lambda$ is an eigenvalue.

In the following section, we will turn to the proofs of our results.

\section{Proofs}\label{proofs}
We split  this section into three subsections. In the first one, we  derive the Corollaries \ref{secondorderasymptoticsforomega} to \ref{asymptoticsfuergeradeangangsdaten} from Theorem \ref{spectrum} and Theorem \ref{theoremhoehereOrdnung}. In Subsection \ref{sectionspectrum}, we will compute the spectrum (and the corresponding eigenfunctions) of the linear operator $\mathcal{L}$.
The information about the spectrum of $\mathcal{L}$ finally enables us to prove 
Theorem \ref{theoremhoehereOrdnung} in Subsection \ref{proofsection}.
     
\subsection{Higher order asymptotics --- Proofs of Corollaries \ref{secondorderasymptoticsforomega} to \ref{asymptoticsfuergeradeangangsdaten}}\label{proofsofcorollaries}
     All corollaries of Theorem \ref{theoremhoehereOrdnung} given in the first section will be proved using the same strategy.
We start by considering the leading order asymptotics stated in Corollary \ref{secondorderasymptoticsforomega}   and choose $n=0$ in Theorem \ref{theoremhoehereOrdnung}. In this case, the assumption stated in \eqref{assumption} is actually empty, and because the eigenvalues $\lambda_0=0$ and $\lambda_1=1/2$ have both multiplicity $1$,  it holds that
\[
\|f(\hat t) - a_{0,1}\psi_{0,1} -e^{- \hat t/2} a_{1,1} \psi_{1,1}\|_{L^2(\mu)} \lesssim \hat t e^{-\hat t},
\]
for any $\hat t\ge 1$. Actually, according to Theorem \ref{spectrum}, the eigenfunctions corresponding to the first two eigenvalues take the simple form $\psi_{0,1}=1$ and $\psi_{1,1}=\hat z/\sqrt{2}$, and we infer from \eqref{114}  and the conservation \eqref{113} that
\[
a_{0,1} = \la f_0\ra_{\mu},\quad a_{1,1} = \lim_{\hat t\to \infty} e^{\hat t/2} \la f(\hat{t}),z/\sqrt{2}\ra_{\mu}.
\]
We have noticed in \eqref{110} that the mean of $f$ is nothing but the fluid impulse, which is a conserved (see \eqref{111}) and physically meaningful quantity. All other moments $a_{n,j}$ have no particular relevance, and we will thus not keep track of their origin through \eqref{114}.  
 We may thus rewrite the above leading order asymptotics as
 \[
 \|f(\hat t) - \mathcal{I}(h_0) - e^{-\hat t/2} \alpha\hat z\|_{L^2(\mu)} \lesssim \hat t e^{-\hat t},
 \]
 for some constant $\alpha\in\R$.

In order to rewrite the statement in terms of the original vorticity, we will first go back from the relative variables to the self-similar ones,
\[
\|(h(\hat t)-h_* - e^{-\hat t/2}\alpha\hat z\rho_* )\sqrt{r/\rho_*} \|_{L^2(\R^3)} \lesssim \hat t e^{-\hat t},
\]
recalling that the three-dimensional Lebesgue measure $dx$ reduces to $2\pi r drdz$  when integrated against axisymmetric functions. Observing that  $\sqrt{r/\rho_*} \gtrsim 1$, we may drop this weight. Performing now the change of variables introduced in \eqref{changeofvariables} and using the invariance of the fluid impulse under that transformation \eqref{115}, we arrive at
\[
\|\omega(t) - \mathcal{I}(\omega_0) \zeta_*(t)  - \frac{\alpha z}{t+1} \zeta_*(t)\|_{L^2(\R^3)} \lesssim \log (t+1) (t+1)^{-9/4}.
\]
This proves Corollary \ref{secondorderasymptoticsforomega}.

     Next, we turn to the proof of Corollary \ref{asymptoticsfuerimpulsgleichnull}. Since the impulse $\mathcal{I}\left(\omega(t)\right) = \ltwoscalarproduct{f\left(\hat{t}\right)}{\psi_{0,1}}$ is constant in time, the assumption of Theorem \ref{theoremhoehereOrdnung} in the case $n=1$ is satisfied if $\mathcal{I}\left(\omega_0\right)$ vanishes. 
     We obtain that
     \begin{align}
          \|f(\hat t) -e^{- \hat t/2} a_{1,1} \psi_{1,1} - e^{-\hat{t}}\left(a_{2,1}\psi_{2,1} + a_{2,2}\psi_{2,2}\right)\|_{L^2(\mu)} \lesssim \hat t e^{-3\hat t/2}
     \end{align}
     since the eigenvalue $\lambda_2=1$ has multiplicity two.
     The corresponding eigenfunctions are given by $\psi_{2,1} = (2-\hat{z}^2)/\sqrt{8}$ and $\psi_{2,2}= (8-\hat{r}^2)/\sqrt{32}$, thus identity \eqref{114} yields
     \begin{align}
          a_{2,1} = \lim \limits_{\hat{t}\to \infty}e^{\hat{t}}\ltwoscalarproduct{f(\hat{t})}{(2-\hat{z}^2)/\sqrt{8}}, \quad
          a_{2,2} = \lim \limits_{\hat{t}\to \infty}e^{\hat{t}}\ltwoscalarproduct{f(\hat{t})}{(8-\hat{r}^2)/\sqrt{32}}.
     \end{align}
     From here on, the statement of Corollary \ref{asymptoticsfuerimpulsgleichnull} follows as in the previous proof.

     Lastly, we prove Corollary \ref{asymptoticsfuergeradeangangsdaten}. By assumption, the initial data $\omega_0$ is even in the $z$-variable. Since the axisymmetric vorticity equation \eqref{vorticityequation} is invariant under reflections in the $z$-variable, uniqueness of the solution $\omega(t)$ to \eqref{vorticityequation} guarantees that it inherits the same symmetry property from its initial datum. We immediately deduce that
     \begin{align}
          a_{1,1} =\lim \limits_{\hat{t}\to\infty} e^{\hat{t}/2}\ltwoscalarproduct{f\left(\hat{t}\right)}{\hat{z}/\sqrt{2}} =\lim \limits_{t\to\infty} \frac{1}{\sqrt{2}}\int zr^2\omega(t)drdz = 0,
     \end{align}
     since the term $\int zr^2\omega(t) drdz$ vanishes for all $t\geq0$ due to the symmetry of $\omega$. This allows us  to apply Theorem \ref{theoremhoehereOrdnung} in the case $n=2$ and the proof proceeds in the established way. Note that the same reasoning as for $a_{1,1}$ shows that the constants
     \begin{align}
          a_{3,1}\coloneqq \lim \limits_{\hat{t}\to\infty} e^{3\hat{t}/2}\ltwoscalarproduct{f\left(\hat{t}\right)}{\psi_{3,1}} \quad \text{and} \quad a_{3,2}\coloneqq \lim \limits_{\hat{t}\to\infty} e^{3\hat{t}/2}\ltwoscalarproduct{f\left(\hat{t}\right)}{\psi_{3,2}}
     \end{align}
     both vanish as well, since $\psi_{3,1}\sim 6\hat{z}-\hat{z}^3$ and $\psi_{3,2}\sim\hat{z}(8-\hat{r}^2)$ are odd in the $z$-direction. Therefore, no second correction term appears in the statement of Corollary \ref{asymptoticsfuergeradeangangsdaten}. We see  that the symmetry assumption on $\omega_0$ does not only rule out the impact of the eigenfunction $\psi_{1,1}$ but also of any other odd eigenfunction.
     
     \medskip

     After deriving the main results for the vorticity $\omega$, we now investigate the spectrum of the linear operator $\mathcal{L}$.

\subsection{The spectrum of the linear operator \texorpdfstring{$\mathcal{L}$}{} --- Theorem \ref{spectrum}}\label{sectionspectrum}

In this subsection, our aim is to diagonalize the differential operator $\L$ in the Hilbert space $L^2(\mu)$.
Determining the eigenvalues of a linear differential operator is a classical task in mathematical physics and, more generally, the theory of (non-)linear partial differential equations. It is often convenient, to exploit the symmetries of the operator to reduce the dimensionality of the problem. This can be done by making a separation-of-variables ansatz. In our setting, we decompose the 
linear operator $\mathcal{L}$ into its  radial and axial parts, that is,
\[
\L = \L_r + \L_z  \quad \mbox{where }\mathcal{L}_r = -\partial_r^2+\frac{1}{2}r\partial_r - \frac{3}{r}\partial_r,\quad \mathcal{L}_z=-\partial_z^2 +\frac{1}{2}z\partial_z,
\]
and we will look for eigenfunctions of the form $f(r,z) = \phi(r)\psi(z)$.

It is convenient to rescale variables in order to transform the differential operators into a kind-of ``textbook'' form. More precisely, we introduce
\[
s = \frac{r^2}4 \quad \mbox{and}\quad y = \frac{z}2,
\]
so that
\[
\L_s = -s\partial_s^2 +(s-2)\partial_s,\quad  \L_y = -\frac14\partial_y^2 +\frac12 y\partial_y.
\]
The corresponding eigenvalue equations are  Laguerre and Hermite differential equations, respectively. For any $\ell,n\in\N_0$, it thus holds
\begin{equation}\label{3}
\L_s L_{\ell}^{(1)} = \ell L^{(1)}_{\ell}, \quad \L_y H_n = \frac{n}2 H_n,
\end{equation}
where $L_{\ell}^{(1)}$ are Laguerre polynomials and $H_n$ are Hermite polynomials. These form an orthogonal basis of $L^2(\R_+,se^{-s})$ and $L^2(\R, e^{-y^2})$, respectively. We refer to Chapters 18.1 and 18.3 in the monograph \cite{ArfkenHarrisWeber2012} for references.

This observation motivates to consider and   orthonormal basis of  $L^2(\mu)$ given by
\[
\phi_\ell(r) \otimes\psi_{n}(z) = c_{n,\ell} L_{\ell}^{(1)} (r^2/4)  H_n(z/2),
\]
for some constants $c_{n,\ell}$ suitably chosen, so that
\[
f(r,z) = \sum_{\ell,r}\langle f,\phi_{\ell}\otimes \psi_n\rangle \phi_{\ell}(r)\psi_n(z).
\]
The eigenvalues of $\L$ are then precisely the eigenvalues $\phi_{\ell}(r)\otimes\psi_n(z)$, which can be computed by using \eqref{3}, namely,
\[
\L(\phi_{\ell}\otimes \psi_n)= \psi_n \L_r\phi_{\ell} + \phi_{\ell}\L_z \psi_n = \left(\ell+\frac{n}2\right)\phi_{\ell}\otimes \psi_n.
\]

With an orthonormal basis consisting of eigenfunctions $\psi_{\ell,n}\coloneqq \phi_\ell \otimes \psi_n$ at hand, we can rewrite $\mathcal{L}$ as follows
\begin{align}
     \mathcal{L}f = \sum\limits_{n,\ell=0}^\infty \lambda_{\ell,n} \ltwoscalarproduct{f}{\psi_{\ell,n}}\psi_{\ell,n},
\end{align}
where $\lambda_{\ell,n}\coloneqq \ell + \frac{n}{2}$.
By standard results from functional analysis, see, e.g., Chapter X.1 in \cite{Conway1990}, an operator of this form is self-adjoint and its spectrum is given by 
\[
\sigma\left(\mathcal{
L}\right) = \overline{\left\{\lambda_{\ell,n}\right\}} = \left\{\ell + \frac{n}{2} : \ell,n \in \N_0\right\}.
\]
The results in Theorem \ref{spectrum} are thus derived.

\medskip

An important tool for $L^2$-based proofs is the Poincar\'e inequality which enables us to estimate the Dirichlet form of the operator $\mathcal{L}$, that is the $L^2(\mu)$ norm of the gradient. Its standard version in the Gauss space is the Brascamp--Lieb inequality
                    \begin{align}\label{2}
                     \frac{1}{2}\ltwomunorm{f -\la f \ra_{\mu}}^2\le     \ltwomunorm{\nabla f(t)}^2 ,                 
                     \end{align} 
  where $\la f\ra_{\mu}$ denotes the integral of $f$ with respect to the probability measure $\mu$,                 see, for instance, \cite{BrascampLieb1976,BobkovLedoux2000}. Having the precise information on the  spectrum at hand, we are able to compute the sharp constant easily. Indeed, setting $g=f-\la f\ra_\mu$ for convenience, we use the eigenvalue expansion of $g$ to write
  \begin{align*}
  \|\grad g\|_{L^2(\mu)}^2 = \la g, \mathcal{L} g\ra_{\mu}  = \sum_{n=0}^{\infty} \sum_{j=1}^{N_n} \la g,\mathcal{L}\psi_{n,j}\ra_{\mu}\la g,\psi_{n,j}\ra_{\mu} = \sum_{n=0}^{\infty} \lambda_n \sum_{j=1}^{N_n} \la g,\psi_{n,j}\ra_{\mu}^2.
  \end{align*}
 Because $\psi_{0,1}\equiv 1$ and since $g$ has zero mean, the zeroth mode can be neglected. For all other modes, we bound $\lambda_n\ge \lambda_1 = 1/2$, so that                  
   \begin{align*}
  \|\grad g\|_{L^2(\mu)}^2 \ge \frac12 \sum_{n=0}^{\infty}\sum_{j=1}^{N_n} \la g,\psi_{n,j}\ra_{\mu}^2 = \frac{1}{2}\|g\|_{L^2(\mu)}^2,
  \end{align*}
which is just \eqref{2}.

\subsection{Main result for relative quantity --- Proof of Theorem \ref{theoremhoehereOrdnung}}\label{proofsection}
     In the remaining part, we only work with the equations for $f$ or $h$. For notational convenience we thus drop the hats of the spatial and time variables.

     As usual in $L^2$-theory based arguments, we will test solutions $f$ of \eqref{equationf} against other functions like eigenfunctions or the solution itself. While the linear part of the equation behaves quite well in this situation, the nonlinear part has to be treated with more care. For convenience, we state an integration by parts identity for \emph{arbitrary} functions $f$ and $g$, that will be used several times in the sequel: It holds that
               \begin{equation}\label{integrationbypartsidentity}
            \frac{1}{2}   \int g \myvector{r\\z}\cdot m f d\mu - \int g m\cdot \nabla f d\mu 
               = \int \nabla g\cdot m f d\mu +2 \int  \frac{m_r}{r}gf d\mu,
          \end{equation}
where      we used the explicit form of the measure $\mu$ and the fact that the divergence-free condition of the velocity field $m$ in cylindrical coordinates is equivalent to $\nabla\cdot \left(rm\right)=0$. 

We need an estimate on the (rescaled) velocity vector field.

     \begin{lemma}\label{estimatem}
          Let $\omega$ be a solution  with initial data $\omega_0$ as in Theorem \ref{TA} and let $m$   denote the corresponding rescaled  velocity field  \eqref{changeofvariables}.  Then it holds that
               \begin{align}
                    \left\|m(t)\right\|_{L^\infty\left(\mathbb{H}\right)}\lesssim 1 \quad \text{ for all } t\geq 0.
               \end{align}
     \end{lemma}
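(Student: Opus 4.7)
The plan is to combine a standard Biot--Savart estimate with the uniform bounds on $h$ already available in the self-similar variables. Since the rescaling in \eqref{changeofvariables} is chosen so that $m$ stands to $h$ exactly as $u$ stands to $\omega$ (the elliptic system \eqref{relationvoritcityvelocity} together with the free-slip condition is invariant under the parabolic rescaling), it suffices to prove the pointwise bound
\[
    \|m(\hat t)\|_{L^{\infty}(\mathbb{H})} \lesssim \|h(\hat t)\|_{L^1(\mathbb{R}^3)}^{1/3}\,\|h(\hat t)\|_{L^{\infty}(\mathbb{R}^3)}^{2/3},
\]
and then to verify that the right-hand side is uniformly bounded in $\hat t$.

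For the first step, I would exploit the $\mathbb{R}^3$--Biot--Savart representation. Since in the axisymmetric-without-swirl class the vorticity vector is $h\,e_\theta$, the standard $\mathbb{R}^3$ formula gives
\[
m(\hat x) = -\frac{1}{4\pi}\,\nabla \times \int_{\mathbb{R}^3} \frac{h(\hat y)\,e_\theta(\hat y)}{|\hat x - \hat y|}\,d\hat y,
\quad\text{hence}\quad
|m(\hat x)| \;\lesssim\; \int_{\mathbb{R}^3} \frac{|h(\hat y)|}{|\hat x-\hat y|^2}\,d\hat y.
\]
The kernel $|\hat x|^{-2}$ belongs to $L^{3/2,\infty}(\mathbb{R}^3)$, so splitting the convolution integral at an arbitrary radius $R$ around the singularity, estimating the near piece in $L^{\infty}$ and the tail piece in $L^1$, and optimising $R$ yields the interpolation bound above (this is the usual dimensional-analysis argument; the exponents $1/3$ and $2/3$ are forced by scaling).

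For the second step, the $L^{\infty}(\mathbb{R}^3)$ norm of $h$ coincides with its $L^{\infty}(\mathbb{H})$ norm and is controlled by \eqref{boundonh}. For the $L^1(\mathbb{R}^3)$ norm, I would use $dx = 2\pi\,\hat r\,d\hat r d\hat z$ on axisymmetric functions and Cauchy--Schwarz:
\[
   \|h(\hat t)\|_{L^1(\mathbb{R}^3)} \;=\; 2\pi \int \hat r\,|h(\hat t)|\,d\hat r\,d\hat z
   \;\leq\; 2\pi\,\|h(\hat t)\|_{L^1(\mathbb{H})}^{1/2}\,\|\hat r^2 h(\hat t)\|_{L^1(\mathbb{H})}^{1/2}.
\]
The first factor is uniformly bounded by \eqref{boundonh}, while the second factor is uniformly bounded because $\|\hat r^2 h(\hat t)\|_{L^1(\mathbb{H})} = \|r^2\omega(t)\|_{L^1(\mathbb{H})}$ by the change of variables \eqref{115}, and Vila's result ensures the right-hand side stays bounded for all $t\ge 0$. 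Plugging these uniform bounds into the interpolation inequality of the first step yields $\|m(\hat t)\|_{L^{\infty}(\mathbb{H})} \lesssim 1$, as claimed.

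I do not expect a serious obstruction here: the whole argument is essentially an elementary interpolation combined with bookkeeping between the half-plane and $\mathbb{R}^3$ measures. The only point that must be handled with a bit of care is the invariance of the Biot--Savart law under the rescaling \eqref{changeofvariables} (so that the same pointwise bound can be used in self-similar variables) and the fact that Vila's uniform control translates without loss into uniform control of $\|\hat r^2 h\|_{L^1(\mathbb{H})}$.
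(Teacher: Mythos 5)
Your proof is correct, but it takes a genuinely different route from the paper's. The paper simply cites the interpolation estimate from Proposition~2.3 of \cite{GallaySverak2015}, namely $\|m\|_{L^\infty(\mathbb{H})}\lesssim \|h\|_{L^1(\mathbb{H})}^{1/2}\|h\|_{L^\infty(\mathbb{H})}^{1/2}$, which is a nontrivial estimate about the axisymmetric Biot--Savart kernel on the half-plane; combined with the uniform bounds \eqref{boundonh}, the lemma is then immediate. You instead go back to the full three-dimensional Biot--Savart law, for which the elementary split-at-radius-$R$ argument gives the completely standard bound $\|m\|_{L^\infty}\lesssim \|h\|_{L^1(\mathbb{R}^3)}^{1/3}\|h\|_{L^\infty(\mathbb{R}^3)}^{2/3}$. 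The price you pay is that $\|h\|_{L^1(\mathbb{R}^3)}=2\pi\|\hat r h\|_{L^1(\mathbb{H})}$ carries the extra weight $\hat r$, which is not controlled by \eqref{boundonh} alone; you repair this with a Cauchy--Schwarz step $\|\hat r h\|_{L^1(\mathbb{H})}\le \|h\|_{L^1(\mathbb{H})}^{1/2}\|\hat r^2 h\|_{L^1(\mathbb{H})}^{1/2}$ and invoke the uniform second-moment bound $\|r^2\omega\|_{L^1(\mathbb{H})}\lesssim 1$ (from the hypotheses of Theorem~\ref{TA}, respectively from Vila's result), together with the scale invariance of that quantity under \eqref{changeofvariables}. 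Both routes are sound; yours is more self-contained and elementary (no need to reprove or cite Lorentz-space estimates for the meridional kernel), at the cost of additionally using the second-moment control, which the paper's chosen interpolation does not need at this point. Your preliminary observation that $m$ and $h$ satisfy the same elliptic system as $u$ and $\omega$ under the parabolic rescaling is correct and indeed used implicitly in the paper as well.
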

     
     \begin{proof}The estimate relies on an interpolation estimate derived by Gallay  and \v{S}ver\'ak in \cite{GallaySverak2015}  and a priori estimates on the rescaled vorticity field.
     Indeed, 
          the  estimate is a direct consequence of an interpolation in Proposition 2.3  from \cite{GallaySverak2015},
                         \begin{align}
                    \left\| m \right\|_{L^\infty\left(\mathbb{H}\right)} \lesssim \|h\|_{L^1\left(\mathbb{H}\right)}^{1/2} \|h\|_{L^{\infty}\left(\mathbb{H}\right)}^{1/2},
               \end{align}
where $h$ is the vorticity corresponding to $m$, and from the uniform bounds in \eqref{boundonh}.
     \end{proof}

We use the velocity bound to ensure that $L^2(\mu)$ is a reasonable solution space.

\begin{lemma}
\label{P1}
Suppose that $f$ is a solution to \eqref{equationf} with initial data $f_0\in L^2(\mu)$. Then $f(t)\in L^2(\mu)$ for any $t$ and it holds that
\[
\|f(t) - \la f\ra_{\mu}\|_{L^2(\mu)} \lesssim  e^{-t/2}.
\]
In particular, $f(t)$ is uniformly bounded in $L^2(\mu)$.
\end{lemma}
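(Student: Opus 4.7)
The strategy is a standard $L^2(\mu)$-energy method applied to the mean-subtracted solution $g \coloneqq f - \la f \ra_\mu$. By the conservation of the mean \eqref{113} and since $\mathcal{L}$ annihilates constants, $g$ solves the same equation \eqref{equationf} as $f$ (with $\nabla g = \nabla f$) and satisfies $\la g(t) \ra_\mu = 0$ for all $t\geq 0$. Since $f \in L^2(\mu)$ if and only if $g \in L^2(\mu)$, it suffices to control $\ltwomunorm{g}$.

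Testing \eqref{equationf} against $g$ in $L^2(\mu)$, using $\la g,\mathcal{L} g\ra_\mu = \ltwomunorm{\nabla g}^2$, yields the energy identity
\[
\frac{1}{2}\frac{d}{dt}\ltwomunorm{g}^2 + \ltwomunorm{\nabla g}^2 = e^{-t}\int g\left(\frac{1}{2}\myvector{r\\z}\cdot m f - m\cdot \nabla f\right) d\mu.
\]
On the zero-mean subspace, Theorem \ref{spectrum} gives the sharp Poincaré inequality $\ltwomunorm{\nabla g}^2 \ge \frac{1}{2}\ltwomunorm{g}^2$, corresponding to the first nontrivial eigenvalue $\lambda_1 = 1/2$. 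This already produces the expected linear rate $e^{-t/2}$. For the right-hand side, I will apply the integration-by-parts identity \eqref{integrationbypartsidentity} to remove the $\nabla f$, rewriting it as
\[
e^{-t}\int \nabla g\cdot m f\, d\mu + 2 e^{-t}\int \frac{m_r}{r}g f\, d\mu.
\]
Using Cauchy-Schwarz together with Lemma \ref{estimatem} and an analogous pointwise bound $\|m_r/r\|_{L^\infty(\H)} \lesssim 1$ (which follows from the Biot-Savart representation for axisymmetric fields and the uniform bound on $h$), these terms are dominated by $C e^{-t}\ltwomunorm{f}\bigl(\ltwomunorm{\nabla g} + \ltwomunorm{g}\bigr)$.

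Writing $\ltwomunorm{f} \le \ltwomunorm{g} + |\la f\ra_\mu|$ and applying Young's inequality to absorb a fraction of $\ltwomunorm{\nabla g}^2$ into the linear dissipation, I arrive at a differential inequality of the form
\[
\frac{d}{dt}\ltwomunorm{g}^2 + c\,\ltwomunorm{g}^2 \lesssim e^{-t}\bigl(\ltwomunorm{g}^2 + 1\bigr),
\]
for some $c>0$. Gronwall's lemma then delivers uniform boundedness in $L^2(\mu)$ together with exponential decay.

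The main obstacle is controlling the nonlinear right-hand side while preserving the sharp rate $e^{-t/2}$. Heuristically this should be a true perturbation, since the nonlinearity in \eqref{equationf} carries the prefactor $e^{-t}$ which is strictly smaller than the linear rate $e^{-t/2}$. The technical difficulty is that the factor $\nabla f$ forces the use of the IBP identity \eqref{integrationbypartsidentity}, which in turn requires the pointwise estimate on $m_r/r$ in addition to the one on $m$ from Lemma \ref{estimatem}. To obtain the sharp rate without any logarithmic loss from the Gronwall step, one can either bootstrap (feeding the uniform $L^2(\mu)$ bound just established back into the nonlinear estimate), or use a Duhamel representation combined with the semigroup estimate $\|e^{-t\mathcal{L}}\|_{L^2(\mu)\to L^2(\mu)}\le e^{-t/2}$ on the zero-mean subspace, controlling the nonlinearity in the dual norm $H^{-1}(\mu)$.
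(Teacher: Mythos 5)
Your overall strategy is the same as the paper's: an $L^2(\mu)$-energy estimate on $g\coloneqq f-\la f\ra_\mu$, the spectral-gap Poincar\'e inequality with constant $1/2$, the integration-by-parts identity \eqref{integrationbypartsidentity}, the $L^\infty$ velocity bound of Lemma \ref{estimatem}, and Gronwall. There is one substantive gap in the way you close the estimate, plus a minor Gronwall issue you already flag.

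The gap: to control the term $2e^{-t}\int \frac{m_r}{r}g f\,d\mu$ produced by \eqref{integrationbypartsidentity}, you invoke a pointwise bound $\|m_r/r\|_{L^\infty(\H)}\lesssim 1$, asserting it ``follows from the Biot--Savart representation and the uniform bound on $h$.'' This is not established in the paper, and it does not follow from the interpolation inequality underlying Lemma \ref{estimatem}. By incompressibility $m_r/r=-\partial_r m_r-\partial_z m_z$, so your claim is essentially an $L^\infty$ bound on $\nabla m$; but Calder\'on--Zygmund theory gives $\nabla m\in L^p$ for $p<\infty$ from $h\in L^1\cap L^\infty$, not $L^\infty$, and upgrading would need more regularity on $h$ than is available here. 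The paper sidesteps this entirely: it applies Cauchy--Schwarz so that the weight $r^{-1}$ lands on the solution rather than on the velocity, producing a factor $\|r^{-1}(f-\la f\ra_\mu)\|_{L^2(\mu)}$, and then controls that via the elementary Hardy-type inequality of Lemma \ref{L3} combined with the Poincar\'e inequality \eqref{2}, $\|r^{-1}\psi\|_{L^2(\mu)}\lesssim\|\psi\|_{L^2(\mu)}+\|\partial_r\psi\|_{L^2(\mu)}$. That step is a one-line integration by parts and keeps the whole argument self-contained within $L^2(\mu)$. You should replace the $\|m_r/r\|_{L^\infty}$ claim by this Hardy argument.

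The minor point, which you correctly anticipate: running Gronwall on $F=\|g\|_{L^2(\mu)}^2$ with the forcing $Ce^{-t}$ puts you exactly at the resonance $\lambda=\mu=1$ of Lemma \ref{gronwallversion}, which would cost a factor $(1+t)^{1/2}$. The paper avoids this simply by dividing through by $\|g\|_{L^2(\mu)}$ and applying part \ref{gronwallB} of Lemma \ref{gronwallversion} to $F=\|g\|_{L^2(\mu)}$ itself, so that $\lambda=1/2<\mu=1$; no Duhamel representation or bootstrap is required.
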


\begin{proof}
The proof of this estimate is fairly standard. We recall that the fluid impulse $\la f\ra_{\mu}$ is preserved \eqref{113}, and we compute, integrating by parts twice, using \eqref{integrationbypartsidentity},
\begin{align*}
\frac12\frac{d}{dt} \|f-\la f\ra_{\mu}\|_{L^2(\mu)}^2 + \|\grad f\|_{L^2(\mu)}^2= e^{-t} \left(\la f-\la f\ra_{\mu} , m\cdot \grad f\ra_{\mu} +2 \la f-\la f\ra_{\mu} ,r^{-1} m_r f\ra_{\mu}\right).
\end{align*}
We estimate the velocity vector uniformly with the help of Lemma \ref{estimatem}, and invoke the Cauchy--Schwarz inequality to obtain
\begin{align*}
\mel \frac12\frac{d}{dt} \|f-\la f\ra_{\mu}\|_{L^2(\mu)}^2 + \|\grad f\|_{L^2(\mu)}^2\\
& \lesssim e^{-t} \|f-\la f\ra_{\mu}\|_{L^2(\mu)} \left( \|\grad f\|_{L^2(\mu)}   + |\la f\ra_{\mu}| + \|r^{-1}(f-\la f\ra_{\mu})\|_{L^2(\mu)}\right).
\end{align*}
Via the Hardy inequality from Lemma \ref{L3} in the appendix and the Poincar\'e inequality in \eqref{2}, we notice that  the last error term above can be dropped. Using Young's inequality, \eqref{110} and the conservation of the fluid impulse, we thus arrive at
\[
 \frac{1}{2}\frac{d}{dt} \|f-\la f\ra_{\mu}\|_{L^2(\mu)}^2 + (1-C e^{-t}) \|\grad f\|_{L^2(\mu)}^2 \le C e^{-t} \left(\|f-\la f\ra_{\mu}\|_{L^2(\mu)}+ \|f-\la f\ra_{\mu}\|_{L^2(\mu)}^2\right).
 \]
Applying the Poincar\'e inequality \eqref{2} once more, we obtain an estimate that is of the form considered in part \ref{gronwallB} of the Gronwall lemma \ref{gronwallversion} with $\lambda=1/2$ and $\mu=1$. Integration yields thus the first statement of the proposition. 
With help of the triangle inequality we immediately obtain
\begin{align}
     \ltwomunorm{f(t)} \lesssim e^{-t/2} + |\la f\ra_{\mu} |\lesssim 1
\end{align}
since $\langle f \rangle_\mu $ is constant in time, cf.\ \eqref{113}.
\end{proof}

The above lemma does not only legitimize $L^2(\mu)$ as a solution space, but also serves as the starting point for higher order asymptotics. In Proposition \ref{decayoff}, we will derive faster decay rates of $f$ with help of a dynamically improved Poincar\'e inequality via induction over the eigenvalues. Lemma \ref{P1} will constitute the base case. 

 In our derivation of Theorem \ref{theoremhoehereOrdnung}, it will be important to ensure that low frequency eigenmodes are asymptotically suppressed. This will be guaranteed if the solution is decaying fast enough. Indeed, the following result shows that, up to higher order error terms, the spectral gap estimate can be improved along the nonlinear evolution as in the linear case, cf.~\eqref{2}.

\begin{namedproposition}[Dynamically improved Poincar\'e inequality]\label{dynamicpoincare}
     Let $f$ be a solution of \eqref{equationf} with initial datum $f_0 \in L^2(\mu)$ satisfying
          \begin{align}\label{105}
               \ltwomunorm{f(t)}\lesssim e^{-\lambda_nt} \quad \text{ for all } t\geq 0
          \end{align}
     and a given $n\in\N_0$.
     Then the following two inequalities hold.
     \begin{enumerate}[label=(\alph{enumi}),ref=(\alph{enumi})]
          \item \label{poincareA}There exists a constant $C\geq 0$ such that
               \begin{align}
            \mel        \lambda_{n+1}\| f(t) - e^{-\lambda_nt}\sum_{j=1}^{N_n}a_{n,j}\psi_{n,j}\|_{L^2(\mu)}^2\\
            & \le   \|\nabla (f(t)-e^{-\lambda_nt}\sum_{j=1}^{N_n}a_{n,j}\psi_{n,k})\|_{L^2(\mu)}^2 + C e^{-2\lambda_{n+2}t}
               \end{align}
               holds for all $t\geq0$. Here $a_{n,j}\coloneqq \lim \limits_{t\to\infty} e^{\lambda_nt}\ltwoscalarproduct{f(t)}{\psi_{n,j}}$.
          \item \label{poincareB}There exists a constant $C\geq 0$ such that
               \begin{align}\mel
       \lambda_{n+2}\|f(t) - e^{-\lambda_nt}\sum_{j=1}^{N_n}a_{n,j}\psi_{n,j}- e^{-\lambda_{n+1}t}\sum_{j=1}^{N_{n+1}}a_{n+1,j}\psi_{n+1,j}\|_{L^2(\mu)}^2    \\
       &\le           \|\nabla (f(t)-e^{-\lambda_nt}\sum_{j=1}^{N_n}a_{n,j}\psi_{n,k}-e^{-\lambda_{n+1}t}\sum_{j=1}^{N_{n+1}}a_{n+1,j}\psi_{n+1,k})\|_{L^2(\mu)}^2\\
       &\quad                     + Ce^{-2\lambda_{n+2}t}
               \end{align}
               holds for all $t\geq0$.  The constants $a_{n,j}$ are given as in part \ref{poincareA} and accordingly $a_{n+1,j}\coloneqq \lim\limits_{t\to\infty} e^{\lambda_{n+1}t}\ltwoscalarproduct{f(t)}{\psi_{n+1,j}}$.
     \end{enumerate}
     
\end{namedproposition}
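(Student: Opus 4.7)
The plan is to decompose $f$ and the proposed correctors in the $\mu$-orthonormal basis $\{\psi_{k,j}\}$ of eigenfunctions of $\mathcal{L}$ from Theorem \ref{spectrum}, and to show that the projections of the residual $g$ onto every eigenmode with $\lambda_k \le \lambda_n$ (respectively $\lambda_k \le \lambda_{n+1}$ in part \ref{poincareB}) decay at the improved rate $e^{-\lambda_{n+2}t}$. Once this is achieved, the Parseval identities
\[
\ltwomunorm{g}^2 = \sum_{k,j}\gamma_{k,j}^2,\qquad \ltwomunorm{\nabla g}^2 = \ltwoscalarproduct{g}{\mathcal{L}g} = \sum_{k,j}\lambda_k\gamma_{k,j}^2
\]
(with $\gamma_{k,j}\coloneqq \ltwoscalarproduct{g}{\psi_{k,j}}$) immediately yield the claim, since the only obstruction to the bare spectral-gap estimate lies in the finitely many modes with $\lambda_k < \lambda_{n+1}$ (respectively $\lambda_{n+2}$), whose contribution is absorbed into the $Ce^{-2\lambda_{n+2}t}$ error.

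To derive the sharp decay of the low-mode projections I first record an ODE for each coefficient $\alpha_{k,j}(t)\coloneqq \ltwoscalarproduct{f(t)}{\psi_{k,j}}$. Testing \eqref{equationf} against $\psi_{k,j}$, exploiting the self-adjointness of $\mathcal{L}$, and invoking the integration-by-parts identity \eqref{integrationbypartsidentity} yields
\[
\alpha_{k,j}'(t) + \lambda_k \alpha_{k,j}(t) = e^{-t}\!\left(\int \nabla\psi_{k,j}\cdot mf\,d\mu + 2\int \tfrac{m_r}{r}\psi_{k,j} f\,d\mu\right) \eqqcolon B_{k,j}(t).
\]
The uniform bound $\|m(t)\|_{L^\infty(\mathbb{H})}\lesssim 1$ from Lemma \ref{estimatem}, together with Cauchy--Schwarz and the observation that $\ltwomunorm{\nabla\psi_{k,j}}$ and $\ltwomunorm{\psi_{k,j}/r}$ are both finite (the latter because $d\mu = r^2\rho_*\,drdz$ absorbs the singularity at $r=0$), combined with the standing hypothesis $\ltwomunorm{f(t)}\lesssim e^{-\lambda_n t}$, then give $|B_{k,j}(t)|\lesssim e^{-(1+\lambda_n)t} = e^{-\lambda_{n+2}t}$.

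For $k<n$, I integrate the ODE backwards from $s=\infty$: starting from $\alpha_{k,j}(t) = e^{-\lambda_k(t-s)}\alpha_{k,j}(s) + \int_s^t e^{-\lambda_k(t-\tau)}B_{k,j}(\tau)\,d\tau$, the a priori bound $|\alpha_{k,j}(s)|\le\ltwomunorm{f(s)}\lesssim e^{-\lambda_n s}$ combined with $\lambda_k<\lambda_n$ kills the boundary term, and the remaining tail integral converges because $\lambda_k<\lambda_{n+2}$, leaving $|\alpha_{k,j}(t)|\lesssim e^{-\lambda_{n+2}t}$. The same backward integration applied at level $k=n$ produces simultaneously the existence of $a_{n,j}=\lim_{t\to\infty}e^{\lambda_n t}\alpha_{n,j}(t)$ and the bound $|\alpha_{n,j}(t)-e^{-\lambda_n t}a_{n,j}|\lesssim e^{-\lambda_{n+2}t}$, which finishes part \ref{poincareA}. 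For part \ref{poincareB} the same argument is repeated at level $k=n+1$, this time by forward integration from $0$: since $\lambda_{n+1}<\lambda_{n+2}$, the function $e^{\lambda_{n+1}\tau}B_{n+1,j}(\tau)$ is integrable on $[0,\infty)$, so $a_{n+1,j}$ exists and $|\alpha_{n+1,j}(t)-e^{-\lambda_{n+1}t}a_{n+1,j}|\lesssim e^{-\lambda_{n+2}t}$.

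The main subtlety lies in simultaneously obtaining the sharp rate $e^{-\lambda_{n+2}t}$ for \emph{all} low-mode residues, and this hinges entirely on the extra prefactor $e^{-t}$ carried by the nonlinearity of \eqref{equationf} as a consequence of the self-similar change of variables \eqref{changeofvariables}: it is this factor that promotes the input decay $\ltwomunorm{f}=O(e^{-\lambda_n t})$ to the forcing decay $|B_{k,j}|=O(e^{-\lambda_{n+2}t})$ in the projection ODEs, and hence to the correct exponential rate for the eigenmode residues. Everything else is bookkeeping with the orthogonal expansion.
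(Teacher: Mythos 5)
Your proposal is correct and follows essentially the same route as the paper. You expand in the eigenbasis, observe that the spectral gap is only obstructed by the finitely many low modes, derive the ODE for each projection $\alpha_{k,j}(t)$ by testing \eqref{equationf} against $\psi_{k,j}$ via self-adjointness of $\mathcal{L}$ and the integration-by-parts identity \eqref{integrationbypartsidentity}, bound the forcing $B_{k,j}$ by $e^{-\lambda_{n+2}t}$ using Lemma \ref{estimatem}, the Hardy-type control, and the standing decay \eqref{105}, and then obtain the improved decay by integrating: backward from infinity (killing the boundary term via the a priori bound) for $k<n$, the tail-integral/Cauchy argument producing the limit $a_{n,j}$ with remainder $\lesssim e^{-\lambda_{n+2}t}$ at $k=n$, and the analogous argument at $k=n+1$ for part \ref{poincareB}. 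This is precisely the paper's argument, with the paper making the Parseval bookkeeping step fully explicit in \eqref{100}--\eqref{103} while you summarize it; the mechanism, the role of the extra $e^{-t}$ prefactor in the nonlinearity, and the treatment of the critical modes coincide.
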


Note  that \emph{a priori} it is not clear that the constants $a_{n,j}$ and $a_{n+1,j}$ are finite. 

\begin{proof}
We commence with the proof of part \ref{poincareA}. As a start, we expand $f$ in eigenfunctions, i.e., $f = \sum_{k=0}^{\infty}\sum_{j=1}^{N_k} \ltwoscalarproduct{f}{\psi_{k,j}}\psi_{k,j}$ and compute, using the orthonormality of the eigenfunctions that
\begin{equation}\label{100}
 \|f-e^{-\lambda_n t} \sum_j a_{n,j} \psi_{n,j}\|_{L^2(\mu)}^2= \sum_{m,j}  \la f,\psi_{m,j}\ra_{\mu}^2 -2  e^{-\lambda_n t}\sum_j a_{n,j} \la f, \psi_{n,j}\ra_{\mu} +e^{-2\lambda_nt } \sum_j a_{n,j}^2.
\end{equation}
Similarly, using in addition the symmetry of the linear operator $\mathcal{L}$, we notice that
\begin{align*}
\mel \|\grad(f-e^{-\lambda_n t} \sum_j a_{n,j} \psi_{n,j})\|_{L^2(\mu)}^2\\
& = \sum_{m,j} \lambda_m \la f,\psi_{m,j}\ra_{\mu}^2 -2 \lambda_n e^{-\lambda_n t}\sum_j a_{n,j} \la f, \psi_{n,j}\ra_{\mu} +e^{-2\lambda_nt }\lambda_n \sum_j a_{n,j}^2.
\end{align*}
For the first term on the right-hand side, we have that
\begin{align*}
\sum_{m,j} \lambda_m \la f,\psi_{m,j}\ra_{\mu}^2 & \ge \lambda_{n+1} \sum_{m,j} \la f,\psi_{m,j}\ra_{\mu}^2  - \sum_{m=0}^n \sum_j (\lambda_{n+1} - \lambda_{m})\la f,\psi_{m,j}\ra_{\mu}^2,
\end{align*}
and using the identity \eqref{100}, we then find
\begin{equation}\label{103}
\begin{aligned}
\mel \|\grad(f-e^{-\lambda_n t} \sum_j a_{n,j} \psi_{n,j})\|_{L^2(\mu)}^2\\
&\ge \lambda_{n+1}  \|f-e^{-\lambda_n t} \sum_j a_{n,j} \psi_{n,j}\|_{L^2(\mu)}^2 - \sum_{m=0}^n \sum_j (\lambda_{n+1} - \lambda_{m})\la f,\psi_{m,j}\ra_{\mu}^2\\
&\quad +2(\lambda_{n+1}- \lambda_n )e^{-\lambda_n t}\sum_j a_{n,j} \la f, \psi_{n,j}\ra_{\mu} - e^{-2\lambda_nt }(\lambda_{n+1}-\lambda_n )\sum_j a_{n,j}^2\\
& = \lambda_{n+1}  \|f-e^{-\lambda_n t} \sum_j a_{n,j} \psi_{n,j}\|_{L^2(\mu)}^2 - \sum_{m=0}^{n-1} \sum_j (\lambda_{n+1} - \lambda_{m})\la f,\psi_{m,j}\ra_{\mu}^2\\
&\quad - (\lambda_{n+1}-\lambda_n) \sum_j \left(\la f,\psi_{n,j}\ra_{\mu} -e^{-\lambda_n t} a_{n,j}\right)^2.
\end{aligned}
\end{equation}
We have to ensure that the second and the third term on the right-hand side are of the order $O(e^{-2\lambda_{n+2}t})$.

Let us first consider the evolution of the projections $\ltwoscalarproduct{f(t)}{\psi_{m,j}}$ for $m < n$. In view of the imposed decay on $f$, it is clear that they are decaying faster than $e^{-\lambda_m t}$. Indeed, it holds that
\begin{equation}\label{101}
e^{\lambda_m t} |\la f(t),\psi_{m,j}\ra_{\mu} | \le e^{\lambda_m t} \|f(t)\|_{L^2(\mu)} \lesssim e^{-(\lambda_n-\lambda_m)t},
\end{equation}
by the assumption in the lemma, and the right-hand side is decaying to zero for $m<n$. In order to improve on the rate, we use the equation. Applying the projection to \eqref{equationf} yields
\begin{align}
\frac{d}{dt} \la f,\psi_{m,j}\ra_{\mu} + \lambda_m \la f,\psi_{m,j}\ra_{\mu} = \frac{1}{2} e^{-t} \la \myvector{r\\z} \cdot m f,\psi_{m,j}\ra_{\mu}  -e^{-t} \la m\cdot\grad f,\psi_{m,j}\ra_{\mu}.
\end{align}
Using the integration by parts formula \eqref{integrationbypartsidentity}, the derivatives can be put onto the eigenfunction,
\[
\frac{d}{dt} \la f,\psi_{m,j}\ra_{\mu} + \lambda_m \la f,\psi_{m,j}\ra_{\mu} = e^{-t} \la f,\grad\psi_{m,j}\cdot m\ra_{\mu} +2 e^{-t} \la f, r^{-1} m_r\psi_{m,j}\ra_{\mu}.
\]
Using the decay assumption on the solution, we thus obtain the estimate
\[
\left| \frac{d}{dt} \la f,\psi_{m,j}\ra_{\mu} + \lambda_m \la f,\psi_{m,j}\ra_{\mu}\right|\lesssim  e^{-(\lambda_n+1)t} \|m\|_{L^{\infty}}\left(\|\grad\psi_{m,j}\|_{L^2(\mu)} + \|r^{-1} \psi_{m,j}\|_{L^2(\mu)}\right).
\]
Thanks to the Hardy-type inequality from Lemma \ref{L3}, the terms involving the eigenfunctions are controlled by $\|\psi_{m,j}\|_{L^2(\mu)} + \|\grad\psi_{m,j}\|_{L^2(\mu)}=1+\sqrt{\lambda_m}$. Moreover, the velocity is bounded thanks to Lemma \ref{estimatem}. We thus find that
\begin{equation}\label{104}
\left| \frac{d}{dt} \left( e^{\lambda_m t} \la f,\psi_{m,j}\ra_{\mu}\right)  \right| \lesssim 
e^{-(\lambda_n-\lambda_m+1)t} .
\end{equation}

If $m<n$, we use the observation \eqref{101} to   deduce that 
\begin{equation}\label{102}
 | \la f(t),\psi_{m,j}\ra_{\mu}|\lesssim e^{-(\lambda_n+1)t} =e^{-\lambda_{n+2}t}.
 \end{equation}
 
If $m=n$, we observe that 
  $t\mapsto \frac{d}{dt}\absolutevalue{e^{\lambda_nt}\ltwoscalarproduct{f(t)}{\psi_{n,j}}}$ is a Cauchy sequence and therefore there exists a constant $a_{n,j}$, indeed given by $a_{n,j} = \lim\limits_{t\to\infty}e^{\lambda_nt}\ltwoscalarproduct{f(t)}{\psi_{n,j}}$, such that \[
     \absolutevalue{e^{\lambda_nt}\ltwoscalarproduct{f(t)}{\psi_{n,j}} - a_{n,j}}\lesssim e^{-t}.
     \]
Multiplying this estimate by $e^{-\lambda_n t}$, we obtain the same decay behavior as in \eqref{102}, and thus, plugging both bounds into \eqref{103} finishes the proof of   part \ref{poincareA}.

     The proof of part \ref{poincareB} proceeds essentially in the  same way. It holds that
     \begin{align*}
\mel     \lambda_{n+2} \|f - e^{-\lambda_n t}\sum_j a_{n,j} \psi_{n,j} - e^{-\lambda_{n+1}t }\sum_j a_{n+1,j}\psi_{n+1,j}\|_{L^2(\mu)}^2\\
&\le \|\grad(f - e^{-\lambda_n t}\sum_j a_{n,j} \psi_{n,j} - e^{-\lambda_{n+1}t }\sum_j a_{n+1,j}\psi_{n+1,j})\|_{L^2(\mu)}^2\\
&\quad + \sum_{m=0}^{n-1} \sum_j (\lambda_{n+2}-\lambda_m) \la f,\psi_{m,j}\ra_{\mu}^2 \\
&\quad + (\lambda_{n+2}-\lambda_n) \sum_j \left(\la f,\psi_{n,j}\ra_{\mu} - e^{-\lambda_{n}t}a_{n,j}\right)^2\\
&\quad + (\lambda_{n+2}-\lambda_n)\sum_j \left(\la f,\psi_{n+1,j}\ra_{\mu} - e^{-\lambda_{n+1} t} a_{n+1,j}\right)^2.
     \end{align*}
     The only term that is new compared to the case considered in \ref{poincareA} is the one displayed in the last line above. For this, we note that  the estimate in \eqref{104} (where now $m=n+1$) becomes
     \[
     \left|\frac{d}{dt} \left(e^{\lambda_{n+1}t}\la f,\psi_{n+1,j}\ra_{\mu}\right)\right| \lesssim e^{-(\lambda_n-\lambda_{n+1}+1)t} = e^{-t/2},
     \]
and from this point on, we may conclude as before.
\end{proof}

We like to remark  that  the maximal number of correction terms for the dynamically improved Poincar\'e inequality is two --- at least when relying upon our  elementary proof.
This observation provides another explanation for the constraint on the number of correction terms in the main Theorem \ref{theoremhoehereOrdnung}.

The following proposition illustrates why the decaying hypothesis in the previous proposition can actually be assumed to prove our main result, although the assumptions in Theorem \ref{theoremhoehereOrdnung}   seem to be weaker.

\begin{proposition}\label{decayoff}
     Let $n\in\N_0$ and $f$ a solution of \eqref{equationf} with initial datum $f_0 \in L^2(\mu)$ such that
          \begin{align}\label{assumption2}
               \lim \limits_{t\to\infty} e^{\lambda_kt}\ltwoscalarproduct{f(t) }{\psi_{k,j}} = 0
          \end{align}
     for all $k=0,\dots,n-1$ and $j=1,\dots,N_k$. Then it holds that
          \begin{align}
               \ltwomunorm{f(t)}\lesssim e^{-\lambda_nt} \quad \text{ for all }t\geq 0.
          \end{align}
\end{proposition}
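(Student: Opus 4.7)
I would argue by induction on $n$. The base case $n=0$ amounts to the uniform bound $\|f(t)\|_{L^2(\mu)}\lesssim 1=e^{-\lambda_0 t}$, which is exactly Lemma \ref{P1} (the vacuous hypothesis \eqref{assumption2} imposes nothing). For the induction step from $n-1$ to $n$, I note that the hypothesis \eqref{assumption2} for $k\le n-1$ contains the one for $k\le n-2$, so the induction hypothesis applies and yields
\[
\|f(t)\|_{L^2(\mu)}\lesssim e^{-\lambda_{n-1}t}.
\]
This is precisely the decay hypothesis needed to invoke part \ref{poincareA} of Proposition \ref{dynamicpoincare} at index $n-1$. Crucially, the coefficients $a_{n-1,j}=\lim_{t\to\infty}e^{\lambda_{n-1}t}\langle f(t),\psi_{n-1,j}\rangle_\mu$ appearing in that proposition \emph{vanish identically} by the hypothesis \eqref{assumption2} of the proposition I am proving, so the dynamically improved Poincaré inequality collapses to
\[
\lambda_n\|f(t)\|_{L^2(\mu)}^2 \le \|\nabla f(t)\|_{L^2(\mu)}^2 + Ce^{-2\lambda_{n+1}t}.
\]

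Second, I run a standard $L^2(\mu)$ energy estimate: testing equation \eqref{equationf} against $f$ and using the integration-by-parts identity \eqref{integrationbypartsidentity} with $g=f$ produces
\[
\tfrac12\tfrac{d}{dt}\|f\|_{L^2(\mu)}^2 + \|\nabla f\|_{L^2(\mu)}^2 = e^{-t}\left(\textstyle\int \nabla f\cdot m f\, d\mu + 2\int \frac{m_r}{r}f^2\, d\mu\right).
\]
Applying $\|m\|_{L^\infty}\lesssim 1$ from Lemma \ref{estimatem}, the Hardy-type inequality of Lemma \ref{L3} to handle the $r^{-1}$ factor, together with Cauchy--Schwarz and Young's inequality, the nonlinear right-hand side is absorbed into a fraction of $\|\nabla f\|_{L^2(\mu)}^2$ up to an error $Ce^{-2t}\|f\|_{L^2(\mu)}^2$. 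This reduces to
\[
\tfrac{d}{dt}\|f\|_{L^2(\mu)}^2 + \|\nabla f\|_{L^2(\mu)}^2 \lesssim e^{-2t}\|f\|_{L^2(\mu)}^2.
\]

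Finally, I substitute the improved Poincaré estimate and multiply by the integrating factor $e^{2\lambda_n t}$ (more precisely, by $e^{2\lambda_n t}$ times a bounded factor coming from the $Ce^{-2t}$ correction, handled by Lemma \ref{gronwallversion}) to obtain
\[
\tfrac{d}{dt}\bigl(e^{2\lambda_n t}\|f\|_{L^2(\mu)}^2\bigr) \lesssim e^{(2\lambda_n-2\lambda_{n+1})t} + e^{(2\lambda_n-2)t}\|f\|_{L^2(\mu)}^2.
\]
The first term equals $e^{-t}$ and is integrable; for the second, the induction hypothesis $\|f(t)\|_{L^2(\mu)}^2\lesssim e^{-2\lambda_{n-1}t}$ gives a bound of the form $e^{(2\lambda_n-2-2\lambda_{n-1})t}=e^{-(n/2+1)t}$, also integrable. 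Integration on $[0,t]$ then yields $e^{2\lambda_n t}\|f(t)\|_{L^2(\mu)}^2\lesssim 1$, which closes the induction.

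The main subtlety is to coordinate three exponential rates: the Poincaré remainder $e^{-2\lambda_{n+1}t}$, the nonlinearity $e^{-2t}\|f\|^2$, and the target decay $e^{-2\lambda_n t}$. Both error terms happen to beat the target by exactly $e^{-t}$ thanks to the gap $\lambda_{n+1}-\lambda_n=1/2$ and the explicit $e^{-t}$ prefactor on the nonlinearity arising from the self-similar rescaling \eqref{changeofvariables}; this is what allows the sharp rate $\lambda_n$ (rather than $\lambda_n/2$) to be recovered from the Gronwall step. The only non-automatic ingredient is invoking the dynamically improved Poincaré inequality, whose hypothesis is delivered precisely by the induction hypothesis one step below.
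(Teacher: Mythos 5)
Your induction scheme, the collapse of the dynamically improved Poincar\'e inequality when the hypothesis \eqref{assumption2} kills the coefficients $a_{n-1,j}$, the $L^2(\mu)$ energy estimate via \eqref{integrationbypartsidentity}, and the closing Gronwall step are exactly the paper's argument (the paper phrases the induction step as $n\to n+1$ whereas you go $n-1\to n$, but the content is identical). One computational slip deserves flagging, because the way you currently present the final step it matters. After applying Lemma \ref{estimatem} and the Hardy inequality, the right-hand side of the energy identity is bounded by $e^{-t}\left(\|f\|_{L^2(\mu)}^2 + \|f\|_{L^2(\mu)}\|\nabla f\|_{L^2(\mu)}\right)$: the term $e^{-t}\int r^{-1} m_r f^2\,d\mu$ produces a genuine $e^{-t}\|f\|_{L^2(\mu)}^2$ contribution that cannot be upgraded to $e^{-2t}\|f\|_{L^2(\mu)}^2$ by Young's inequality (only the cross term $e^{-t}\|f\|\|\nabla f\|$ yields an $e^{-2t}$ error). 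Your concluding paragraph substitutes the induction hypothesis $\|f\|_{L^2(\mu)}^2\lesssim e^{-2\lambda_{n-1}t}$ into the error and integrates; with the correct $e^{-t}\|f\|_{L^2(\mu)}^2$ error this gives $e^{(2\lambda_n-1)t}\|f\|_{L^2(\mu)}^2\lesssim 1$, which is not integrable on $[0,\infty)$, and the argument would lose a factor $t$. The fix is the one the paper actually uses (and which your parenthetical ``handled by Lemma \ref{gronwallversion}'' already gestures at): move the $Ce^{-t}\|f\|_{L^2(\mu)}^2$ error to the left to obtain $\frac{d}{dt}\|f\|_{L^2(\mu)}^2+(2\lambda_n-Ce^{-t})\|f\|_{L^2(\mu)}^2\lesssim e^{-2\lambda_{n+1}t}$ and invoke part \ref{gronwallB} of Lemma \ref{gronwallversion} with $\lambda=2\lambda_n$, $\mu=2\lambda_{n+1}$, which absorbs the $Ce^{-t}$ correction into a bounded modification of the integrating factor without any loss. (Also a trivial arithmetic slip: $2\lambda_n-2-2\lambda_{n-1}=-1$, not $-(n/2+1)$, though both are integrable so nothing hinges on it.)
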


\begin{proof}
     We prove the statement via induction over $n$. In the base case $n=0$, assumption \eqref{assumption2} is empty and $\lambda_0=0$. The statement $\ltwomunorm{f(t)}\lesssim 1$ is covered by Lemma \ref{P1} since $f_0\in L^2(\mu)$.
 
   Let us now assume that \eqref{assumption2} holds true for all $k\leq n$ and in addition that
          \begin{align}\label{9}
               \ltwomunorm{f(t)}\lesssim e^{-\lambda_nt}
          \end{align}
by induction hypothesis. Our goal is to improve the exponential decay rate from $\lambda_n$ to $\lambda_{n+1}$.
The argument is very similar to that of Lemma \ref{P1}, which we have to improve.  
With estimate \eqref{9} and the assumption \eqref{assumption2} at hand, the Poincar\'e inequality Lemma \ref{dynamicpoincare}, part \ref{poincareA}, provides
          \begin{align}\label{10}
        \lambda_{n+1}\ltwomunorm{f(t)}^2 \le       \ltwomunorm{\nabla f(t)}^2   + Ce^{-2\lambda_{n+2}t}.
          \end{align}
          
     We now compute the $L^2$ decay of $f$ by testing the evolution and integrating by parts, 
          \begin{align}
               \frac{1}{2}\frac{d}{dt} \ltwomunorm{f}^2  + \|\grad f\|_{L^2(\mu)}^2 & = e^{-t}\left(\int \frac{1}{2}\myvector{r\\z} \cdot m f^2 d\mu - \int f m\cdot \nabla f d\mu\right).          \end{align}
          Using the  integration-by-parts identity \eqref{integrationbypartsidentity}, the latter turns into
          \[
     \frac{d}{dt} \ltwomunorm{f}^2   +2\| \grad f\|_{L^2(\mu)}^2\le 2e^{-t}\left(\int  f m\cdot \nabla f  d\mu + 2 \int  \frac{m_r}{r}f^2d\mu\right).
     \]
We now estimate the right-hand side with the help of the velocity bound in  Lemma \ref{estimatem} and the Hardy-type inequality in Lemma \ref{L3},
\begin{align*}
\left|\int  f m\cdot \nabla f  d\mu + 2 \int  \frac{m_r}{r}f^2d\mu \right| &\lesssim \|m\|_{L^{\infty}} \left( \|f\|_{L^2(\mu)} \|\grad f\|_{L^2(\mu)} + \|r^{-1} f\|_{L^2(\mu)}\|f\|_{L^2(\mu)}\right)\\
&\lesssim \|f\|_{L^2(\mu)}^2 + \|f\|_{L^2(\mu)} \|\grad f\|_{L^2(\mu)},
\end{align*}
so that via Young's and  Poincar\'e's estimate \eqref{10}
\[
\frac{d}{dt} \ltwomunorm{f}^2   +(2\lambda_{n+1}-C e^{-t}) \| f\|_{L^2(\mu)}^2 \lesssim e^{-2\lambda_{n+2}t}\]
for some $C>0$.
We invoke the Gronwall inequality from part \ref{gronwallB} of Lemma \ref{gronwallversion} to deduce 
\[
\|f(t)\|_{L^2(\mu)}\lesssim e^{-\lambda_{n+1}t}
\]
as desired.
\end{proof}
     
Finally, we are in the position to prove our main result.

\begin{proof}[ of Theorem \ref{theoremhoehereOrdnung}]We start by noticing that, under the hypothesis of the theorem, Propositions \ref{decayoff} and \ref{dynamicpoincare} imply that
\begin{equation}
\label{106}
\lambda_{n+2} \|f (t) - \psi_n(t)-\psi_{n+1}(t)\|_{L^2(\mu)}^2 \le \|\grad(f(t)-\psi_n(t)-\psi_{n+1}(t))\|_{L^2(\mu)}^2 + C e^{-2\lambda_{n+2}t},
\end{equation}
where we have set
\[
\psi_m(t) = e^{-\lambda_m t} \sum_{j=1}^{N_m} a_{m,j}\psi_{m,j},
\]
for $m=n,n+1$. Noticing that the  $\psi_m$'s solve the linear equation $\partial_t \psi_m + \mathcal{L}\psi_m=0$, we  have that 
          \begin{align}\mel
               \partial_t \left(f-\psi_n - \psi_{n+1}\right) + \mathcal{L}\left(f-\psi_n - \psi_{n+1}\right) 
               \\
              & = e^{-t}\left\{\frac{1}{2}\myvector{r\\z}\cdot m \left(f-\psi_n - \psi_{n+1}\right)-m\cdot \nabla \left(f-\psi_n - \psi_{n+1}\right)   \right.
               \\&
                \quad  + \left.\frac{1}{2}\myvector{r\\z}\cdot m \left(\psi_n + \psi_{n+1}\right)  - m\cdot \nabla \left(\psi_n + \psi_{n+1}\right)\right\}.
          \end{align}
          Using the symmetry of $\mathcal{L}$ and the integration-by-parts identity \eqref{integrationbypartsidentity}, we consequentially compute
          \begin{align}\mel
               \frac{1}{2}\frac{d}{dt} \ltwomunorm{f-\psi_n-\psi_{n+1}}^2  + \ltwomunorm{\grad(f-\psi_n-\psi_{n+1})}^2 
            \\
            &   =                
                e^{-t}\left\{\frac{1}{2}\int  \left(f-\psi_n-\psi_{n+1}\right)^2\myvector{r\\z}\cdot m d\mu \right.
                       \\&
               \qquad -\int  \left(f-\psi_n-\psi_{n+1}\right) m\cdot \nabla  \left(f-\psi_n-\psi_{n+1}\right)d\mu 
               \\&
               \qquad +\frac{1}{2}\int  \left(f-\psi_n-\psi_{n+1}\right) \myvector{r\\z} \cdot m  \left(\psi_n+\psi_{n+1}\right)d\mu
               \\&
               \qquad\left.- \int  \left(f-\psi_n-\psi_{n+1}\right) m\cdot \nabla  \left(\psi_n+\psi_{n+1}\right)d\mu\right\}
               \\& =e^{-t}\left\{\int (f-\psi_n-\psi_{n+1}) m\cdot \grad(f-\psi_n-\psi_{n+1})\, d\mu\right.\\
               &\qquad + 2\int \frac{m_r}r (f-\psi_n-\psi_{n+1})^2\, d\mu \\
              &\qquad + \int (\psi_n+\psi_{n+1}) m\cdot \grad(f-\psi_n-\psi_{n+1})\, d\mu\\
              &\qquad +\left. \int \frac{m_r}r (\psi_n+\psi_{n+1}) (f-\psi_n-\psi_{n+1})\, d\mu\right\}.
          \end{align}
          We may uniformly estimate the velocity term with the help of Lemma \ref{estimatem}. Making use of the Hardy-type inequality from Lemma \ref{L3} and using the trivial bounds
          \[
          \|\psi_n+\psi_{n+1}\|_{L^2(\mu)} + \|\grad(\psi_n+\psi_{n+1})\|_{L^2(\mu)} \lesssim e^{-\lambda_n t},
          \]
          we find that
       \begin{align}\mel
                \frac{d}{dt} \ltwomunorm{f-\psi_n-\psi_{n+1}}^2  + 2\ltwomunorm{\grad(f-\psi_n-\psi_{n+1})}^2 
            \\      
   &\lesssim e^{-t}\left\{ \|f-\psi_n-\psi_{n+1}\|_{L^2(\mu)} \|\grad(f-\psi_n-\psi_{n+1}) \|_{L^2(\mu)}      +   \|f-\psi_n-\psi_{n+1}\|_{L^2(\mu)}^2\right\}\\
   &\quad  + e^{-\lambda_{n+2} t} \left\{ \|f-\psi_n-\psi_{n+1}\|_{L^2(\mu)} +   \|\grad(f-\psi_n-\psi_{n+1})\|_{L^2(\mu)} \right\}.
   \end{align}
 An application of Young's inequality followed by the Poincar\'e estimate \eqref{106} then gives
 \[
 \frac{d}{dt}    \ltwomunorm{f-\psi_n-\psi_{n+1}}^2     + (2\lambda_{n+2} - C e^{-t}) \ltwomunorm{f-\psi_n-\psi_{n+1}}^2  \lesssim e^{-2\lambda_{n+2}t}.
 \]  
  The statement of the theorem now directly follows with from the Gronwall inequality in Proposition \ref{gronwallversion} part \ref{gronwallC}.
\end{proof}

\newpage
\appendix
\renewcommand{\thesection}{\Alph{section}}
\section*{Appendix --- Some elementary estimates}\label{appendix}
\setcounter{section}{1}
\setcounter{theorem}{0}
In this appendix we collect some rather standard result, whose proofs we provide for the convenience of the reader.

\begin{namedlemma}[Hardy-type inequality]\label{L3}
It holds that
\[
\|\psi\|_{L^2(\rho_*)} \lesssim \|\psi\|_{L^2(\mu)} + \|\partial_r \psi\|_{L^2(\mu)}.
\]
\end{namedlemma}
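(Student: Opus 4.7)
The first step is to translate the statement into a one-dimensional Hardy-type estimate. Recalling that $d\mu = \hat r^2 \rho_*\, d\hat r d\hat z$, we have the identity
\[
\|\psi\|_{L^2(\rho_*)}^2 = \int \psi^2\, \rho_*\, d\hat r d\hat z = \int \hat r^{-2}\psi^2\, d\mu,
\]
so the claim reduces to $\|\hat r^{-1}\psi\|_{L^2(\mu)} \lesssim \|\psi\|_{L^2(\mu)}+\|\partial_{\hat r}\psi\|_{L^2(\mu)}$. After pulling out the (harmless) Gaussian factor $e^{-\hat z^2/4}$, it suffices, by Fubini, to prove the weighted one-dimensional inequality
\[
\int_0^\infty \phi(r)^2\, r\, e^{-r^2/4}\, dr \;\lesssim\; \int_0^\infty \phi(r)^2\, r^3\, e^{-r^2/4}\, dr + \int_0^\infty \phi'(r)^2\, r^3\, e^{-r^2/4}\, dr
\]
for a generic smooth slice $\phi(r)=\psi(r,\hat z)$.

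The key observation is that the weight $r\,e^{-r^2/4}$ can be written as (part of) a total derivative. More precisely, setting $g(r)=r^2 e^{-r^2/4}$, a direct computation gives $g'(r)=\bigl(2r-\tfrac12 r^3\bigr)e^{-r^2/4}$, i.e.\
\[
r\, e^{-r^2/4} = \tfrac{1}{2}g'(r) + \tfrac{1}{4}r^3\, e^{-r^2/4}.
\]
Plugging this into the left-hand side and integrating by parts in the $g'$ term — where the boundary contributions vanish because $g(0)=0$ and $g(\infty)=0$ — yields
\[
\int_0^\infty \phi^2\, r\, e^{-r^2/4}\, dr \;=\; \tfrac{1}{4}\!\int_0^\infty \phi^2 r^3 e^{-r^2/4}\, dr \;-\; \int_0^\infty \phi\,\phi'\, r^2\, e^{-r^2/4}\, dr.
\]
For the remaining cross term I would split the weight as $r^2 e^{-r^2/4} = (r\,e^{-r^2/4})^{1/2}\cdot (r^3 e^{-r^2/4})^{1/2}$ and invoke Cauchy--Schwarz, followed by Young's inequality, to absorb half of the left-hand side into itself. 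This produces
\[
\tfrac{1}{2}\!\int \phi^2 r e^{-r^2/4}\,dr \le \tfrac{1}{4}\!\int \phi^2 r^3 e^{-r^2/4}\,dr + \tfrac{1}{2}\!\int (\phi')^2 r^3 e^{-r^2/4}\,dr,
\]
which is exactly the desired slice inequality.

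Multiplying by $\tfrac{1}{16\sqrt{\pi}}e^{-\hat z^2/4}$ and integrating in $\hat z$ finishes the proof. The only mildly delicate point is making sure the boundary term at $r=0$ in the integration by parts really vanishes; this is automatic thanks to the factor $g(r)=r^2 e^{-r^2/4}$, which kills any blow-up of $\phi$ at the origin provided $\phi$ is mildly regular (in particular for all eigenfunctions $\psi_{\ell,n}$ and for the smooth test functions used in the density argument). No further ingredients are needed — this is a purely integration-by-parts computation, which is why the authors relegate it to the appendix.
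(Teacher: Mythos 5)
Your proof is correct and follows essentially the same route as the paper: both reduce to a one-dimensional estimate in the radial variable, write the low-order weight $r\,e^{-r^2/4}$ as (half) the derivative of $r^2\,e^{-r^2/4}$ plus a $\tfrac14 r^3\,e^{-r^2/4}$ piece, integrate by parts, and absorb the cross term via Cauchy--Schwarz and Young. Your packaging of the weight into $g(r)=r^2e^{-r^2/4}$ is algebraically identical to the paper's choice of writing $r=\tfrac12\tfrac{d}{dr}r^2$ and letting the derivative hit $\psi^2 e^{-(r^2+z^2)/4}$.
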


\begin{proof}
The estimate can  be derived by a suitable integration by parts in one space dimension. We write and compute
\begin{align*}
\int_0^{\infty} \psi^2 r  e^{-\frac{r^2+z^2}4}\, dr &= \frac12 \int_{0}^{\infty} \psi^2 \left(\frac{d}{dr} r^2\right) e^{-\frac{r^2+z^2}4}\, dr\\
&  = -\int_0^{\infty} \psi \partial_r \psi r^2 e^{-\frac{r^2+z^2}4}\, dr + \frac14 \int_0^{\infty} \psi^2 r^3  e^{-\frac{r^2+z^2}4}\, dr,
\end{align*}
from which we derive with the help of Young's inequality that
\[
\int_0^{\infty} \psi^2 r  e^{-\frac{r^2+z^2}4}\, dr  \lesssim \int_0^{\infty} \psi^2 r^3  e^{-\frac{r^2+z^2}4}\, dr  + \int_0^{\infty} (\partial_r \psi)^2 r^3  e^{-\frac{r^2+z^2}4}\, dr.
\]
Integration in $z$ gives the desired result.
\end{proof}

\begin{namedlemma}[Gronwall-type inequalities]\label{gronwallversion}
          Let a function $F\colon \left[0,\infty\right)\to \left[0,\infty\right)$ and two constants $0\le \lambda < \mu $ be given.
          \begin{enumerate}[label=(\alph{enumi}),ref=(\alph{enumi})]
  \item \label{gronwallB}
          Assume that there exists a constant $C>0$ such that $F$ satisfies
               \begin{align}
                  \frac{d}{dt} F +(\lambda- Ce^{-t})  F\le C e^{-\mu t}\quad \mbox{for all }t\ge 0.
               \end{align}
          Then it holds that
               \begin{align}
                    F(t)\lesssim  e^{-\lambda t} \quad \text{ for all } t\geq 0.
               \end{align}
                 \item \label{gronwallC}
          Assume that there exists a constant $C>0$ such that $F$ satisfies
               \begin{align}
                  \frac{d}{dt} F +(\lambda- Ce^{-t})  F\le C e^{-\lambda t} \quad \mbox{for all }t\ge 0.
               \end{align}
          Then it holds that
               \begin{align}
                    F(t)\lesssim  (1+t) e^{-\lambda t} \quad \text{ for all } t\geq 0.
               \end{align}
          \end{enumerate}
     \end{namedlemma}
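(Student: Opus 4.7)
The plan is to treat both parts uniformly by means of an integrating factor, exploiting that the coefficient $\lambda - Ce^{-t}$ differs from the constant $\lambda$ only by an exponentially small correction that is integrable in time. Explicitly, I would introduce
\[
G(t) \coloneqq F(t) \exp\!\left(\int_0^t \bigl(\lambda - C e^{-s}\bigr)\, ds\right) = F(t)\, e^{\lambda t}\, e^{-C(1-e^{-t})},
\]
and observe that the prefactor $e^{-C(1-e^{-t})}$ stays bounded between $e^{-C}$ and $1$ for all $t \ge 0$, so that $G(t)$ is comparable to $F(t) e^{\lambda t}$ up to universal constants. Differentiating and using the differential inequality assumed for $F$, one gets at once
\[
\frac{d}{dt} G(t) \le C e^{-\mu t} \cdot e^{\lambda t} e^{-C(1-e^{-t})} \lesssim e^{-(\mu - \lambda)t}
\]
in setting \textbf{(a)}, and
\[
\frac{d}{dt} G(t) \lesssim 1
\]
in setting \textbf{(b)}.

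For part \ref{gronwallB}, since $\mu > \lambda$, the right-hand side is integrable on $[0,\infty)$, so integration from $0$ to $t$ yields $G(t) \le G(0) + C/(\mu-\lambda)$, which is bounded uniformly in $t$. Undoing the change of variables then gives the desired bound $F(t) \lesssim e^{-\lambda t}$. For part \ref{gronwallC}, integration produces $G(t) \lesssim 1 + t$, and consequently $F(t) \lesssim (1+t) e^{-\lambda t}$ after dividing by the integrating factor.

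The proof is essentially a one-step calculation once the integrating factor is chosen correctly; no real obstacle arises. The only point worth emphasizing in the writeup is the uniform two-sided control $e^{-C} \le e^{-C(1-e^{-t})} \le 1$, which is what makes the correction $-Ce^{-t}$ harmless and allows the comparison between $G$ and $F e^{\lambda t}$ in both directions. No initial smallness of $F(0)$ is needed beyond finiteness, which is implicit in the hypothesis that $F$ maps into $[0,\infty)$.
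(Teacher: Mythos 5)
Your proof is correct and follows essentially the same integrating-factor argument as the paper: both multiply $F$ by $\exp(\int_0^t(\lambda - Ce^{-s})\,ds)$, use that the resulting factor $e^{-C(1-e^{-t})}$ is uniformly comparable to $1$, and integrate the resulting differential inequality. The only cosmetic difference is that the paper handles both parts in one computation by allowing $\mu=\lambda$, while you split the two cases explicitly.
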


     \begin{proof}
     We can prove both statements simultaneously, allowing for $\mu=\lambda$.
          
          We define $\phi(s) = \lambda -Ce^{-s}$ and compute
               \begin{align}
                    \frac{d}{dt}\left(\exp\left(\int_0^t\phi(s)ds\right)F(t)\right) & \leq C\exp\left(\int_0^t\phi(s)ds-\mu t\right) \\
                    & = C\exp\left((\lambda-\mu)t - C(1-e^{-t})\right) \lesssim e^{(\lambda-\mu)t}.
               \end{align}
 Integration in time gives the respective results.
     \end{proof}

\setcounter{section}{1}
\setcounter{theorem}{0}

\section*{Acknowledgment}

	This work is funded by the Deutsche Forschungsgemeinschaft (DFG, German Research Foundation) under Germany's Excellence Strategy EXC 2044--390685587, Mathematics M\"unster: Dynamics Geometry Structure.

\bibliography{mybib}
\bibliographystyle{abbrv}
\end{document}